\newtheorem{theorem}{Theorem}[section]
\newtheorem{corollary}[theorem]{Corollary}
\newtheorem{lemma}[theorem]{Lemma}
\theoremstyle{definition}
\newtheorem{definition}[theorem]{Definition}
\theoremstyle{remark}
\numberwithin{equation}{section}
\DeclareMathOperator{\RE}{Re}
\begin{document}

\title{A new differential subordination technique for a subclass of starlike functions}
\author{S. Sivaprasad Kumar}
\address{Department of Applied Mathematics, Delhi Technological University, Delhi--110042, India}
\email{spkumar@dce.ac.in}
\author[Pooja Yadav]{Pooja Yadav}
\address{Department of Applied Mathematics, Delhi Technological University, Delhi--110042, India}
\email{poojayv100@gmail.com}

\subjclass[2010]{30C45, 30C80}

\keywords{Analytic functions,  Starlike functions, Subordination, Admissibility condition}
\begin{abstract}
In the present investigation, we employ a new technique to find several first and second order differential subordination implications involving the following starlike class associated with a bean shaped domain: \begin{equation*}
\mathcal{S}^*_{\mathfrak{B}}:=\left\{f\in\mathcal{S}:\dfrac{zf'(z)}{f(z)}\prec\sqrt{1+\tanh{z}}=:\mathfrak{B}(z)\right\}.
	\end{equation*} Also, we give several applications stemming from our derived results.
\end{abstract}

\maketitle

\section{Introduction}
Let $\mathbb{D}=\{z\in\mathbb{C}:|z|<1\}$ and $\mathcal{H}$ be the class of  functions $f$ analytic in $\mathbb{D}$ with $f(z)=1+a_{1}z+a_{2}z^{2}+\cdots.$ Also, let  $\mathcal{A}$ be the class of  analytic functions $f$ normalized by $f(0)=f'(0)-1=0$ in $\mathbb{D}$ and $\mathcal{S}$ be the subclass of $\mathcal{A}$
	consisting of all univalent functions. Further $\mathcal{S}^*$ is a subclass of $\mathcal{S}$, which consists of all functions $f$ such that $f(\mathbb{D})$ is starshaped with respect to origin and these functions are known as starlike functions with respect to origin.   Analytically, $f\in\mathcal{S}^{*}$  if we have $\RE zf'(z)/f(z)>0$
	Furthermore, to compare the inclusion properties of two analytic functions $f$ and $g$, having same origin, i.e. $f(0) = g(0)$, we say that $f$ is subordinate to 
	$g$, written as $f\prec g$, if and only if $f(z) = g(w(z))$ where $w(z)$ is a Schwarz function.
	 In terms of subordination, Ma and Minda \cite{maminda} unified all the subclasses of $\mathcal{S}^*$ by defining the following class:
	\begin{equation}\label{sp}
	\mathcal{S}^{*}(\phi)=\bigg\{f\in\mathcal{S}:\frac{zf'(z)}{f(z)}\prec\phi(z)\bigg\},
	\end{equation} 
	where $\phi$ is a univalent and analytic function with positive real part, starlike with respect to  $\phi(0) = 1$,  $\phi'(0) > 0$ and  $\phi(\mathbb{D})$ is symmetric with respect to the real axis.
Several well-known classes can be obtained  by specializing  $\phi$, such as the Janowski strongly starlike class of order $\alpha$ $\mathcal{SS}^*(A,B,\alpha):=\mathcal{S}^*(((1+Az)/(1+Bz))^\alpha)$ where $\alpha\in(0,1]$ and $A,B\in\mathbb{C}$ with $|B|\leq1$ and $A\neq B$, studied by Kumar and Yadav in \cite{siva}.   Here $\mathcal{SS}^*(1,-1,\alpha)=:\mathcal{SS}^*(\alpha)$; the class of strongly starlike functions of order $\alpha$. And for $A,B\in[-1,1]$ and $\alpha=1$ we obtain the class of Janowski starlike functions, denoted as $\mathcal{S}^*(A,B)$, which specifically
reduces to many well-known classes, for instance see \cite{janowski}. Sok\'{o}l and Stankiewicz \cite{sokol1} considered the class $\mathcal{S}^*_L$ by opting $\phi(z)=\sqrt{1+z}.$ Geometrically $\mathcal{S}^*_L=\{f\in\mathcal{S};|(zf'(z)/f(z))^2-1|<1\}$, that is, $zf'(z)/f(z)$ lies in the domain bounded by the right lemniscate of Bernoulli.  Moreover,  Goel and Kumar \cite{pri} studied the starlike class associated with sigmoid, $\mathcal{S}^*(2/(1 + e^{-z}))=:\mathcal{S}^*_{SG}$ and Mendiratta et al. \cite{exp} introduced and studied the class associated with exponential, $\mathcal{S}^*(e^{z})=:\mathcal{S}^*_e$. Similarly, in \cite{bean} the following class was introduced by Kumar and Yadav: 
\begin{equation*}
\mathcal{S}^*_{\mathfrak{B}}:=\left\{f\in\mathcal{S}:\dfrac{zf'(z)}{f(z)}\prec\sqrt{1+\tanh{z}}=:\mathfrak{B}(z)\right\}.
	\end{equation*} 
Observe that the function $\mathfrak{B}(z):=\sqrt{1+\tanh{z}}=\sqrt{2/(1+e^{-2z})}$ conformally maps $\mathbb{D}$ onto the following bean shaped region \begin{equation}\label{domain}\Omega_{\mathfrak{B}}:=\left\{w\in\mathbb{C}:\left|\log\left(\dfrac{w^2}{2-w^2}\right)\right|<2\right\}.\end{equation}
In \cite{bean}, authors obtained various geometric characteristics of $\mathfrak{B}(z)$ and derived several inclusion and radii results associated with $\mathcal{S}^*_{\mathfrak{B}}.$
 The present article is an extension of that work in terms of differential subordination. Here,  we are establishing certain first and second order differential subordination implications related to the function $\mathfrak{B}(z)$, utilizing a lemma that significantly simplifies the calculations. Differential subordination, analogous to differential inequalities in real analysis, has numerous applications. A pivotal contribution to this field was made by Miller and Mocanu's influential monograph \cite{miller}, which sparked a major revolution in the geometric function theory research community.
Many researchers have addressed problems related to differential subordination, such as those in \cite{cho,priyanka,miller2,salu,neha}. Chapter 2 of the aforementioned monograph presents a set of results that establish admissibility conditions for second-order differential subordination. Significant work has been built upon these results. For instance, Madaan et al. \cite{madaan} investigated the class of admissible functions associated with the lemniscate of Bernoulli and proved several first- and second-order differential subordination implications. In \cite{neha}, the author established various second- and third-order differential subordination implications by defining admissibility conditions for the exponential function. In \cite{ali}, the authors modified the concept of second-order differential subordination by introducing $\beta$-admissible functions, and in \cite{antonino}, the authors extended the concept of differential subordination and admissibility conditions to third order.

We now present essential definitions, lemmas, and notations necessary for further discussion.
\begin{lemma}\cite{bean}\label{subradi}
	 Let  $w\in\mathbb{C}$ such that $|w|>\sqrt{2}$ and $R_{0}=e\sqrt{2/(e^2-1)}\simeq1.5208$. Then
	\begin{equation*}
	\bigg|\log\bigg(\frac{w^2}{2-w^2}\bigg)\bigg|\geq2,\quad\text{whenever} \quad |w|\geq R_{0}.
	\end{equation*}
\end{lemma}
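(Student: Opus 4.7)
The plan is to establish the contrapositive: assume $|\log(w^2/(2-w^2))|<2$ and deduce $|w|<R_0$. Let $u:=w^2/(2-w^2)$ and write $u=\rho e^{i\psi}$ using the principal argument, so that the hypothesis reads $(\log\rho)^2+\psi^2<4$; in particular $|\psi|<2<\pi$, which keeps $u$ bounded away from $-1$ and permits the clean inversion $w^2=2u/(1+u)$.

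Computing directly,
\begin{equation*}
|w|^4 \;=\; |w^2|^2 \;=\; \frac{4\rho^2}{|1+u|^2} \;=\; \frac{4\rho^2}{1+2\rho\cos\psi+\rho^2},
\end{equation*}
so the target inequality $|w|^4<R_0^4=4e^4/(e^2-1)^2$ rearranges, after clearing positive denominators, into the quadratic positivity statement
\begin{equation*}
Q(\rho) \;:=\; (2e^2-1)\rho^2 + 2e^4(\cos\psi)\,\rho + e^4 \;>\; 0 \quad\text{for all } \rho>0.
\end{equation*}
When $\cos\psi\geq 0$ every coefficient of $Q$ is positive and the claim is immediate. When $\cos\psi<0$, equivalently $|\psi|\in(\pi/2,2)$, positivity of $Q$ is equivalent to its discriminant being negative, i.e.\ to the inequality $\cos^2\psi<(2e^2-1)/e^4$.

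The only real obstacle is verifying this last inequality under the constraint $|\psi|<2$. Since $(\cos^2\psi)' = -\sin(2\psi) > 0$ on $[\pi/2,2]$ (as $2\psi$ lies in $(\pi,2\pi)$ there), the function $\cos^2\psi$ is increasing on this interval, so it suffices to check the single numerical comparison $e^4\cos^2 2 < 2e^2-1$, i.e.\ roughly $9.46<13.78$, which holds with a comfortable margin. This closes the argument: $Q(\rho)>0$ throughout, hence $|w|<R_0$, proving the contrapositive. The side condition $|w|>\sqrt{2}$ stated in the hypothesis is automatic from $|w|\geq R_0>\sqrt{2}$ and serves only to guarantee that $w^2/(2-w^2)$ is finite.
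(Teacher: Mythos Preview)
Your argument is correct. The contrapositive reduction to the quadratic $Q(\rho)=(2e^2-1)\rho^2+2e^4(\cos\psi)\rho+e^4$ is clean, and the only nontrivial step---controlling the discriminant via the monotonicity of $\cos^2\psi$ on $[\pi/2,2]$ and the numerical check $e^4\cos^2 2<2e^2-1$---goes through as you describe. One small remark: you in fact prove more than needed, since you show $Q(\rho)>0$ for \emph{all} $\rho>0$ rather than only for $\rho$ in the range $e^{-\sqrt{4-\psi^2}}<\rho<e^{\sqrt{4-\psi^2}}$ forced by the hypothesis; but the stronger claim is true, so no harm done.

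As for comparison with the paper: there is nothing to compare against. The lemma is quoted verbatim from the authors' earlier preprint \cite{bean} and is not proved in the present paper at all, so your self-contained argument fills a genuine gap for the reader.
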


\begin{definition}\cite{miller}
     Let $\mathcal{Q}$ denotes the set of functions $q$ that are analytic and univalent on $\overline{\mathbb{D}}\backslash \mathcal{E}(q),$ where $$\mathcal{E}(q)=\left\{\zeta\in\partial\mathbb{D}:\lim_{z\to\zeta}q(z)=\infty\right\}$$
  such that $q'(\zeta)\neq0$ for $\zeta\in\partial\mathbb{D}\backslash \mathcal{E}(q).$ \end{definition}
  \begin{definition}\cite{miller} Let $\Omega$ be a set in $\mathbb{C}$, $q\in \mathcal{Q}$ and $n$ be a positive integer, the class of admissible functions $\Psi_{n}(\Omega,q),$ consists of those functions $\psi:\mathbb{C}^3\times\mathbb{D}\to\mathbb{C}$ that satisfy the admissibility condition $\psi(r,s,t;z)\not\in\Omega$, whenever  $r=q(\zeta),$ $s=m\zeta q'(\zeta),$
 $$\RE\left(\dfrac{t}{s}+1\right)\geq m\RE\left(\dfrac{\zeta q''(\zeta)}{q'(\zeta)}+1\right),$$
 $z\in\mathbb{D},$ $\zeta\in\partial\mathbb{D}\backslash \mathcal{E}(q)$ and $m\geq n$.  In particular, we denote $\Psi_1(\Omega,q)$ as $\Psi(\Omega,q).$ 
 \end{definition}
 \begin{lemma}\cite{miller}
Let $\psi\in\Psi(\Omega,q)$ with $q(0)=1$. If $p\in\mathcal{H}$ satisfies 
$$\psi(p(z),zp'(z),z^2p''(z);z)\in\Omega,$$ then $p\prec q.$
 \end{lemma}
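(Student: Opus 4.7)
The plan is to argue by contradiction: assume $p\not\prec q$ and then exhibit a point $z_{0}\in\mathbb{D}$ at which the triple $(p(z_{0}),z_{0}p'(z_{0}),z_{0}^{2}p''(z_{0}))$ meets the admissibility hypothesis from the definition of $\Psi(\Omega,q)$. This will force $\psi$ evaluated there to lie outside $\Omega$, contradicting the hypothesis that $\psi(p(z),zp'(z),z^{2}p''(z);z)\in\Omega$ for every $z\in\mathbb{D}$.

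The first step is the contact-point construction. Since $p(0)=q(0)=1$, the function $w(z):=q^{-1}(p(z))$ is well-defined and analytic near the origin, with $w(0)=0$ and $|w|<1$ there. Let $r_{0}\in(0,1)$ be the largest radius for which $p(\{|z|<r_{0}\})\subset q(\mathbb{D})$; the hypothesis $p\not\prec q$ makes $r_{0}<1$. A continuity argument, together with the structural assumption $q\in\mathcal{Q}$ (which controls the blow-up set $\mathcal{E}(q)$), produces a point $z_{0}$ with $|z_{0}|=r_{0}$ and a boundary point $\zeta_{0}\in\partial\mathbb{D}\setminus\mathcal{E}(q)$ such that $w(z_{0})=\zeta_{0}$ and $p(z_{0})=q(\zeta_{0})$. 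Because $q'(\zeta_{0})\neq 0$, the map $q$ is locally biholomorphic at $\zeta_{0}$, and $w$ extends analytically across $z_{0}$.

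The second step applies the second-order Jack/Julia--Carath\'eodory lemma to $w$: since $w(0)=0$, $|w(z)|<1$ for $|z|<r_{0}$, and $|w(z_{0})|=1$, there exists a real number $m\geq 1$ with $z_{0}w'(z_{0})=m\zeta_{0}$ and $\RE(1+z_{0}w''(z_{0})/w'(z_{0}))\geq m$. Differentiating $p=q\circ w$ at $z_{0}$ once gives $z_{0}p'(z_{0})=m\zeta_{0}q'(\zeta_{0})$, and a second differentiation combined with the first relation yields
\[
\frac{z_{0}^{2}p''(z_{0})}{z_{0}p'(z_{0})}+1 \;=\; \frac{m\zeta_{0}q''(\zeta_{0})}{q'(\zeta_{0})} + \frac{z_{0}w''(z_{0})}{w'(z_{0})} + 1.
\]
Taking real parts and invoking the Jack inequality,
\[
\RE\!\left(\frac{z_{0}^{2}p''(z_{0})}{z_{0}p'(z_{0})}+1\right)\;\geq\;m\,\RE\!\left(\frac{\zeta_{0}q''(\zeta_{0})}{q'(\zeta_{0})}+1\right).
\]
Setting $r=p(z_{0})=q(\zeta_{0})$, $s=z_{0}p'(z_{0})=m\zeta_{0}q'(\zeta_{0})$, and $t=z_{0}^{2}p''(z_{0})$, this is exactly the admissibility condition with $m\geq n=1$, so $\psi\in\Psi(\Omega,q)$ forces $\psi(r,s,t;z_{0})\notin\Omega$, the desired contradiction.

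The main obstacle is the contact-point construction in the first step: the simultaneous production of $z_{0}$ and $\zeta_{0}$, and in particular the verification that $\zeta_{0}\notin\mathcal{E}(q)$, uses the definition of $\mathcal{Q}$ in an essential way and demands a careful continuity argument along the circles $|z|=r$ as $r$ increases to $r_{0}$. The second-order boundary Jack-type inequality and the subsequent chain-rule bookkeeping are classical once this is in place.
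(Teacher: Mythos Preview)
The paper does not supply its own proof of this lemma: it is quoted verbatim from Miller and Mocanu's monograph \cite{miller} and used as a black box throughout Sections~2 and~3. So there is no ``paper's proof'' to compare against; what you have written is, in outline, the classical Miller--Mocanu argument (essentially Lemma~2.2d together with Lemma~2.2c of \cite{miller}).

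Your sketch is correct in structure and in the chain-rule bookkeeping of the second step. The one place that is genuinely incomplete, and which you yourself flag, is the contact-point construction: you assert that the first boundary contact occurs at a point $q(\zeta_{0})$ with $\zeta_{0}\in\partial\mathbb{D}\setminus\mathcal{E}(q)$, but you do not actually justify this. The issue is that $p$ could in principle first exit $q(\mathbb{D})$ through a boundary point that is a limit of $q$ along a path tending to some $\zeta\in\mathcal{E}(q)$ (where $q$ blows up), or through an ``accessible'' boundary value not in the image $q(\partial\mathbb{D}\setminus\mathcal{E}(q))$. Ruling this out is exactly where the hypothesis $q\in\mathcal{Q}$ earns its keep: since $q$ is analytic and univalent on the compact set $\overline{\mathbb{D}}\setminus\mathcal{E}(q)$ and $\lim_{z\to\zeta}q(z)=\infty$ for $\zeta\in\mathcal{E}(q)$, the finite boundary $\partial q(\mathbb{D})$ is precisely $q(\partial\mathbb{D}\setminus\mathcal{E}(q))$, so the first contact must occur at such a $\zeta_{0}$. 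This is the content of Miller--Mocanu's Lemma~2.2d, and a full write-up would need to spell it out rather than defer it. Apart from this, your argument is the standard one and is sound.
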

 Throughout this article, we consider the function $q(z)=\mathfrak{B}(z)$ and define the admissibility class $\Psi(\Omega,q),$ where $\Omega\subset\mathbb{C}$.
 Since $\mathcal{E}(\mathfrak{B})=\varnothing$, therefore $\mathfrak{B}\in\mathcal{Q}$. For $\zeta=e^{i \theta}\;(0\leq\theta<2\pi)$ we have, 
 $$q(\zeta)=\sqrt{\dfrac{2}{1+e^{-2e^{i\theta}}}},\;\;\;\zeta q'(\zeta)=\dfrac{\sqrt{2}e^{i\theta}e^{-2e^{i\theta}}}{(1+e^{-2e^{i\theta}})^{3/2}}$$ and
 \begin{equation}\label{gt}
     \RE\left(\dfrac{\zeta q''(\zeta)}{q'(\zeta)}\right)=\dfrac{\cos{\theta}-2e^{4\cos{\theta}}\cos{\theta}+e^{2\cos{\theta}}(\cos(\theta-2\sin{\theta})-2\cos(\theta+2\sin{\theta}))}{1+e^{4\cos{\theta}}+2e^{2\cos{\theta}}\cos(2\sin{\theta})}=:g(\theta).
 \end{equation}
 Here, \begin{equation}\label{ot}|q(\zeta)|=\dfrac{\sqrt{2}}{(1+e^{-4\cos\theta}+2e^{-2\cos\theta}\cos(2\sin\theta))^{\frac{1}{4}}}=:\omega(\theta).\end{equation}
From \cite{bean}, we obtain that the minimum value of $\omega(\theta)$ is  $\omega(\pi)=\sqrt{2/(1+e^2)}$ and maximum is $\omega(\theta_0)=:\omega_{\theta_0}\simeq1.438$, where $\theta_0\simeq1.31364$.
 By simple analysis we also obtain that  $\min g(\theta)=g(0)$, and
 \begin{equation}\label{dt}
    |\zeta q'(\zeta)|= \dfrac{\sqrt{2}e^{-2\cos{\theta}}}{(1+e^{-4\cos{\theta}}+2e^{-2\cos{\theta}}\cos(2\sin{\theta}))^{3/4}}:=d(\theta).
 \end{equation}
  Further, a computation shows that  $d'(\theta)=0$ if and only if  $\theta\in\{0,\theta_1,\pi\}$ where $\theta_1\simeq1.639$.  We observe that $d(\theta)$ is increasing in $[0,\theta_1]$ and  decreasing in $[\theta_1,\pi]$.
  Therefore, $\max d(\theta)=d(1.639)=:d_{\theta_1}\simeq0.905$ and  $\min d(\theta)=\min\{d(0),d(\pi)\}=\min\{0.158,0.304\}=d(0)$.
 Moreover,  $\Psi(\Omega,\mathfrak{B})$ is defined to be the class of functions $\psi:\mathbb{C}^3\times \mathbb{D}\to\mathbb{C}$, which satisfies the following conditions:
 $$\psi(r,s,t;z)\not\in\Omega,$$
 whenever
 $$r=\mathfrak{B}(\zeta)=\sqrt{\dfrac{2}{1+e^{-2e^{i\theta}}}};\;\;\;s=m\zeta\mathfrak{B}'(\zeta)=\dfrac{\sqrt{2}me^{i\theta}e^{-2e^{i\theta}}}{(1+e^{-2e^{i\theta}})^{3/4}};\;\;\;\RE\left(1+\dfrac{s}{t}\right)=m(1+g(\theta)),$$
 where $z\in\mathbb{D}$, $0\leq\theta<2\pi$ and $m\geq1.$
 Here, we start from first order differential subordination.
	
\section{First order differential subordination}


We first establish the following lemmas to prove our main results:
\begin{lemma}\label{janlemma}
Let $-1< B\leq0<A\leq 1$ and $w\in\mathbb{C}$ such that $|w|>1$, then 
$$\left|\dfrac{w-1}{A-Bw}\right|\geq1, \;\;\;\text{whenever}\;\;\;  
|w|\geq\dfrac{1+A}{1+B}.$$
\end{lemma}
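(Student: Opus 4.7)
The plan is to square the modulus inequality and reduce it to a one-variable quadratic condition. Expanding $|w-1|^2 \geq |A-Bw|^2$ and collecting terms yields
\begin{equation*}
(1-B^2)|w|^2 - 2(1-AB)\,\RE(w) + (1-A^2) \geq 0.
\end{equation*}
Under the hypotheses $-1 < B \leq 0 < A \leq 1$, both coefficients $1-B^2$ and $1-AB$ are strictly positive. Since the left-hand side is linear in $\RE(w)$ with negative slope $-2(1-AB)$, on each circle $|w|=r$ it is minimized exactly when $\RE(w) = r$, i.e., when $w$ is a positive real. This reduces the problem to proving the scalar inequality $f(r) \geq 0$ for
\begin{equation*}
f(r) := (1-B^2)r^2 - 2(1-AB)r + (1-A^2)
\end{equation*}
at the relevant values of $r = |w|$.

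Next I would factor $f$ explicitly. A short computation shows the discriminant satisfies $(1-AB)^2 - (1-B^2)(1-A^2) = (A-B)^2$, so the roots are clean and the factorization reads
\begin{equation*}
f(r) = (1-B^2)\Bigl(r - \tfrac{1-A}{1-B}\Bigr)\Bigl(r - \tfrac{1+A}{1+B}\Bigr).
\end{equation*}
The smaller root $(1-A)/(1-B)$ lies in $[0,1)$ because $B<A$, so the hypothesis $|w|>1$ already places $r$ strictly above it; the further hypothesis $r \geq (1+A)/(1+B)$ puts $r$ at or above the larger root as well, making both linear factors nonnegative. Since $1-B^2>0$, this gives $f(r) \geq 0$, which is exactly what was needed.

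The calculation is quite short and no single step is a serious obstacle. The one conceptually substantive point is recognizing that the complex inequality becomes sharpest when $w$ is positive real, which is what collapses everything to a real quadratic problem. After that it is routine algebra, and the threshold $(1+A)/(1+B)$ appearing in the statement emerges naturally as the larger root of $f$, confirming that the bound is sharp, with equality at $w = (1+A)/(1+B)$.
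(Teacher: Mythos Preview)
Your proof is correct and follows essentially the same approach as the paper: both reduce the complex inequality to the positive-real case $w=R>0$ (the paper by minimizing the ratio $g(R,\theta)$ over $\theta$, you by observing linearity in $\RE(w)$) and then solve the resulting one-variable inequality. Your linearity argument is in fact a slightly cleaner justification of the reduction, and your explicit factorization exhibits both roots of $f$, but the underlying strategy is the same.
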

\begin{proof}
	Let $w=R e^{i \theta}$ where $R>1$ and $\theta\in(0,2\pi].$ Then
		\begin{gather*}
	\left|\dfrac{w-1}{A-Bw}\right|\geq1
	\intertext{is equivalent to}
	g(R,\theta):=\dfrac{1+R^2-2R\cos{\theta}}{A^2+B^2R^2-2ABR\cos{\theta}}\geq1.
	\end{gather*}A simple computation shows that $g(R,\theta)$ attains its minimum at $\theta=0$. Thus the above inequality holds if $g(R,0)\geq1$,
i.e. \begin{equation}\label{lem}
\dfrac{(R-1)^2}{(A-BR)^2}\geq1.
\end{equation}Since $|w|=R>1$ and $B\leq0$  then $BR\leq B<A,$ thus (\ref{lem}) reduces to 
$$\dfrac{R-1}{A-BR}\geq1\implies R\geq\dfrac{1+A}{1+B}.$$
Thus the result holds.
\end{proof}
\begin{lemma}\label{lemlemma}
Let  $w\in\mathbb{C}$ such that $|w|>1$, then 
$$\left|w^2-1\right|\geq1, \;\;\;\text{whenever}\;\;\;  
|w|\geq\sqrt{2}.$$
\end{lemma}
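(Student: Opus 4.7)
The plan is to mimic the strategy used in the preceding Lemma \ref{janlemma}: parametrize $w$ in polar form, reduce the desired inequality to a two-variable real inequality, and minimize over the angular variable to leave only a one-variable condition on the modulus.

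More concretely, I would set $w = Re^{i\theta}$ with $R=|w|>1$ and $\theta\in(0,2\pi]$. Then
\begin{equation*}
|w^{2}-1|^{2} = |R^{2}e^{2i\theta}-1|^{2} = R^{4} - 2R^{2}\cos(2\theta) + 1,
\end{equation*}
so that $|w^{2}-1|\geq 1$ is equivalent to the real inequality $R^{4} - 2R^{2}\cos(2\theta) \geq 0$, which in turn amounts to checking $h(R,\theta):=R^{4}-2R^{2}\cos(2\theta)+1\geq 1$.

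As a function of $\theta$, the right-hand side is minimized exactly when $\cos(2\theta)=1$, which gives
\begin{equation*}
\min_{\theta} |w^{2}-1|^{2} = R^{4} - 2R^{2} + 1 = (R^{2}-1)^{2}.
\end{equation*}
Thus it suffices to show $(R^{2}-1)^{2}\geq 1$. Since $R>1$, we have $R^{2}-1>0$, so the inequality reduces to the simple condition $R^{2}-1\geq 1$, i.e., $R\geq\sqrt{2}$, which is precisely the hypothesis. I would close by noting the symmetry with Lemma \ref{janlemma}, where a very similar angular minimization was carried out.

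The proof is essentially a routine polar computation, so I do not anticipate any genuine obstacle; the only point that merits a brief justification is the angular minimization step, where one should remark that the coefficient $-2R^{2}$ of $\cos(2\theta)$ is negative (since $R>1>0$), making $\cos(2\theta)=1$ the correct minimizer rather than $\cos(2\theta)=-1$.
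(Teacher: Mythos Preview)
Your proposal is correct and follows essentially the same route as the paper: the paper likewise writes $w=Re^{i\theta}$, reduces $|w^2-1|\ge 1$ to the inequality $L(R,\theta):=1+R^4-2R^2\cos 2\theta\ge 1$, observes that the minimum over $\theta$ occurs at $\theta=0$, and then concludes from $R\ge\sqrt{2}$. Your write-up is in fact slightly more explicit than the paper's at the final step, spelling out that $L(R,0)=(R^2-1)^2\ge 1$ reduces to $R\ge\sqrt{2}$ when $R>1$.
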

\begin{proof}
	Let $w=R e^{i \theta}$ where $R>1$ and $\theta\in(0,2\pi].$ Then
		\begin{gather*}
	\left|w^2-1\right|\geq1
	\intertext{is equivalent to}
	L(R,\theta):=1+R^4-2R^2\cos{2\theta}\geq1.
	\end{gather*}
 Clearly, the above inequality holds if $\min L(R,\theta)\geq1$. Since, $\min L(R,\theta)= L(R,0)$  and $R>1$, thus 
 the result holds.
\end{proof}
 In this section, we find conditions on $\psi(p(z),zp'(z);z)$, so that the following implication holds:
 \begin{equation*}
     \psi(p(z),zp'(z);z)\prec h(z)\implies p(z)\prec\mathfrak{B}(z),
 \end{equation*}where $h(z)$ is either $\mathfrak{B}(z)$, $(1+Az)/(1+Bz)$ $(-1< B\leq0< A\leq1)$ or $\sqrt{1+z}$. Here, $\psi(p(z),zp'(z);z)$ is taken to be as follows: 
 $(p(z))^\delta+\beta(zp'(z))^n$, $(p(z))^\delta+\beta zp'(z)/(p(z))^n$ and $p(z)+zp'(z)/(\beta p(z)+\gamma)$.

\begin{theorem}\label{1stthm1}
Let $n$ be a positive integer, $\delta\in\{0,1\}$, $R_{0}=e\sqrt{2/(e^2-1)}$, $\omega_{\theta_0}\simeq1.438$ and  $\beta\in\mathbb{C}$ such that $|\beta| \geq (R_0+\omega_{\theta_0}^\delta)(1+e^2)^{3n/2}/(\sqrt{2}e)^n$. Suppose $p(z)\in\mathcal{H}$
 and satisfies the subordination
\begin{equation*}
    (p(z))^\delta+\beta (zp'(z))^n\prec \mathfrak{B}(z),
\end{equation*}
then $ p(z)\prec \mathfrak{B}(z).$
\end{theorem}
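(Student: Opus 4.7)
The plan is to recast the implication as an admissibility problem and apply the Miller--Mocanu admissibility lemma stated in the introduction. Set $\Omega := \mathfrak{B}(\mathbb{D}) = \Omega_{\mathfrak{B}}$ and consider the candidate admissible function
$$\psi(r,s,t;z) := r^\delta + \beta s^n.$$
The hypothesis then reads $\psi(p(z), zp'(z), z^2 p''(z); z) \in \Omega$, so the desired subordination $p \prec \mathfrak{B}$ will follow as soon as $\psi \in \Psi(\Omega, \mathfrak{B})$ is verified.

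The admissibility condition requires $\psi(r,s,t;z) \notin \Omega_{\mathfrak{B}}$ whenever $r = \mathfrak{B}(\zeta)$, $s = m\zeta \mathfrak{B}'(\zeta)$, $\zeta \in \partial\mathbb{D}$ and $m \geq 1$. Since $\psi$ has no $t$-dependence, the constraint involving $\RE(t/s+1)$ plays no role here, and I need only a lower bound on $|\psi(r,s;z)|$ large enough to eject it from $\Omega_{\mathfrak{B}}$. By the reverse triangle inequality,
$$|\psi(r,s;z)| \geq |\beta|\, |s|^n - |r|^\delta.$$
I control each factor using the data already collected in the introduction: from \eqref{ot} one has $|r| = \omega(\theta) \leq \omega_{\theta_0}$, hence $|r|^\delta \leq \omega_{\theta_0}^\delta$ for both $\delta = 0$ and $\delta = 1$; from \eqref{dt} one has $|s| = m\, d(\theta) \geq d(0)$, since the excerpt records $d(0)$ as the global minimum of $d$ on $[0, 2\pi)$. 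The key algebraic simplification, using $1 + e^{-4} + 2e^{-2} = (1+e^{-2})^2$, recasts $d(0)$ in the closed form $\sqrt{2}\,e/(1+e^2)^{3/2}$; this is precisely the constant that the stated lower bound on $|\beta|$ is tailored to.

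Plugging these bounds into the hypothesis on $|\beta|$ yields $|\psi(r,s;z)| \geq R_0 > \sqrt{2}$, so Lemma~\ref{subradi} forces $\psi(r,s;z) \notin \Omega_{\mathfrak{B}}$, which is exactly the admissibility condition. The Miller--Mocanu admissibility lemma then delivers $p \prec \mathfrak{B}$ and completes the argument.

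The main conceptual step is the packaging into the admissibility framework; beyond that, the only piece requiring care is the bookkeeping around $d(\theta)$---identifying $d(0)$ rather than $d(\pi)$ as the minimum (which the excerpt's numerical values $0.158$ versus $0.304$ disambiguate) and simplifying it into the explicit constant $\sqrt{2}\,e/(1+e^2)^{3/2}$. No genuinely hard estimate appears; the reverse triangle inequality and Lemma~\ref{subradi} do all the real work in just a couple of lines.
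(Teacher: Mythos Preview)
Your proposal is correct and follows essentially the same route as the paper: set up $\psi(r,s;z)=r^\delta+\beta s^n$, verify admissibility for $\Psi(\Omega_{\mathfrak{B}},\mathfrak{B})$ via the reverse triangle inequality together with the boundary estimates $|r|\leq\omega_{\theta_0}$ and $|s|\geq m\,d(0)=m\sqrt{2}e/(1+e^2)^{3/2}$, conclude $|w|\geq R_0>\sqrt{2}$ from the hypothesis on $|\beta|$, and invoke Lemma~\ref{subradi} followed by the Miller--Mocanu lemma. Your explicit identification of $d(0)$ in closed form and the remark that the $t$-constraint is vacuous are the only additions, and both are cosmetic.
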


\begin{proof}
   Let $q(z)=\sqrt{2/(1+e^{-2z})}=:\mathfrak{B}(z)$ and  $\Omega=q(\mathbb{D})=\Omega_{\mathfrak{B}}.$ Now consider $\psi:\mathbb{C}^2\times\mathbb{D}\to\mathbb{C}$ defined by $\psi(r,s;z)=r^\delta+\beta s^n.$ For $\psi$ to lie in the admissible class $\Psi(\Omega_{\mathfrak{B}},\mathfrak{B})$,it must satisfy 
   $$\psi\left(\sqrt{\dfrac{2}{1+e^{-2e^{i \theta}})}},\dfrac{\sqrt{2}m e^{i \theta}e^{-2e^{i \theta}}}{(1+e^{-2e^{i \theta}})^{3/2}};z\right)\not\in\Omega_{\mathfrak{B}}\qquad(z\in\mathbb{D},0\leq\theta<2\pi,m\geq1),$$ 
   which is equivalent to
   $$\left|\log\left(\dfrac{(r^\delta+\beta s^n)^2}{2-(r^\delta+\beta s^n)^2}\right)\right|\geq2.$$ 
    We now use the Lemma \ref{subradi} to prove the result, for which,  let us assume $w=r^\delta+\beta s^n$.
From (\ref{ot}) and (\ref{dt}) respectively, we have $|r|=|\mathfrak{B}(\zeta)|=|q(\zeta)|=\omega(\theta)\leq\max\omega(\theta)=\omega_{\theta_0}$ and $|s|/m=|\zeta\mathfrak{B}'(\zeta)|=|\zeta q'(\zeta)|=d(\theta)\geq\min d(\theta)=d(0)$.  Thus
\begin{align*}
  |w|&=  |r^\delta+\beta s^n|\\&\geq |\beta| |s|^n-|r|^\delta\\
    &\geq |\beta|m^n\left(d(0)\right)^n-\omega_{\theta_0}^\delta.
\end{align*}
Since $m\geq1$ and $|\beta| \geq (R_0+\omega_{\theta_0}^\delta)(1+e^2)^{3n/2}/(\sqrt{2}e)^n$, we have \begin{equation*}
    |w|\geq|\beta|\left(\dfrac{\sqrt{2}e}{(1+e^2)^{3/2}}\right)^n-\omega_{\theta_0}^\delta\geq R_0>\sqrt{2},
\end{equation*} therefore the result follows at once by Lemma \ref{subradi}.
\end{proof}

\begin{corollary}
    Let $n$ be a positive integer, $\delta\in\{0,1\}$, $R_{0}=e\sqrt{2/(e^2-1)}$, $\omega_{\theta_0}\simeq1.438$ and  $\beta\in\mathbb{C}$ such that $|\beta| \geq (R_0+\omega_{\theta_0}^\delta)(1+e^2)^{3n/2}/(\sqrt{2}e)^n$. Suppose $f\in\mathcal{A}$ such that
    \begin{equation*}
         \left(\dfrac{zf'(z)}{f(z)}\right)^\delta+\beta\left(\dfrac{zf'(z)}{f(z)}\left(1+\dfrac{zf''(z)}{f'(z)}-\dfrac{zf'(z)}{f(z)}\right)\right)^n\prec  \mathfrak{B}(z),
    \end{equation*}then $f\in\mathcal{S}^*_{\mathfrak{B}}.$
\end{corollary}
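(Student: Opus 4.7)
The plan is to reduce the corollary directly to Theorem \ref{1stthm1} by choosing the right auxiliary function $p$. I would set
$$p(z)=\frac{zf'(z)}{f(z)}.$$
Since $f\in\mathcal{A}$, the usual expansion $f(z)=z+a_2z^2+\cdots$ gives $p(0)=1$ and $p$ analytic in $\mathbb{D}$, so $p\in\mathcal{H}$.

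Next I would compute $zp'(z)$ by logarithmic differentiation. From $\log p(z)=\log z+\log f'(z)-\log f(z)$, differentiating and multiplying through by $z$ yields
$$\frac{zp'(z)}{p(z)}=1+\frac{zf''(z)}{f'(z)}-\frac{zf'(z)}{f(z)},$$
so that
$$zp'(z)=\frac{zf'(z)}{f(z)}\left(1+\frac{zf''(z)}{f'(z)}-\frac{zf'(z)}{f(z)}\right).$$
With this identification, the differential expression appearing in the hypothesis of the corollary is exactly $(p(z))^\delta+\beta(zp'(z))^n$. Hence the hypothesis becomes
$$(p(z))^\delta+\beta(zp'(z))^n\prec\mathfrak{B}(z).$$

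Since the numerical constraint $|\beta|\geq(R_0+\omega_{\theta_0}^\delta)(1+e^2)^{3n/2}/(\sqrt{2}e)^n$ is exactly the hypothesis of Theorem \ref{1stthm1}, that theorem applies and yields $p(z)\prec\mathfrak{B}(z)$, i.e.\ $zf'(z)/f(z)\prec\mathfrak{B}(z)$, which is by definition the statement $f\in\mathcal{S}^*_{\mathfrak{B}}$. There is no real obstacle here: the corollary is a direct specialization of Theorem \ref{1stthm1}, and the only point worth checking is that $p$ belongs to $\mathcal{H}$, which follows immediately from the normalization of $f\in\mathcal{A}$.
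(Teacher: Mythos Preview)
Your proof is correct and follows exactly the same route as the paper: set $p(z)=zf'(z)/f(z)$, identify the expression in the hypothesis with $(p(z))^\delta+\beta(zp'(z))^n$, and invoke Theorem~\ref{1stthm1}. Your version is in fact a bit more careful, since you explicitly verify $p\in\mathcal{H}$ and spell out the logarithmic differentiation that the paper leaves implicit.
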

\begin{proof}
    Let $p(z)=zf'(z)/f(z)$, then 
    $$(p(z))^\delta+\beta (zp'(z))^n=\left(\dfrac{zf'(z)}{f(z)}\right)^\delta+\beta\left(\dfrac{zf'(z)}{f(z)}\left(1+\dfrac{zf''(z)}{f'(z)}-\dfrac{zf'(z)}{f(z)}\right)\right)^n.$$
    Therefore the result holds using Theorem \ref{1stthm1}.
\end{proof}
\begin{theorem}\label{1stthm2}
Let $n$ be a positive integer, $\delta\in\{0,1\}$, $R_{0}=e\sqrt{2/(e^2-1)}$, $\omega_{\theta_0}\simeq1.438$ and  $\beta\in\mathbb{C}$ such that  $|\beta| \geq (R_0+\omega_{\theta_{0}}^\delta)(1+e^2)^{(3-n)/2}/(\sqrt{2}e)^{1-n}$. Suppose $p(z)\in\mathcal{H}$ and satisfies the subordination
\begin{equation*}
    (p(z))^\delta+\beta \dfrac{zp'(z)}{(p(z))^n}\prec  \mathfrak{B}(z),
\end{equation*}then $p(z)\prec \mathfrak{B}(z)$.
\end{theorem}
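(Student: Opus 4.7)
My plan is to mirror the admissibility argument carried out in Theorem \ref{1stthm1}. Set $q(z)=\mathfrak{B}(z)$, $\Omega=q(\mathbb{D})=\Omega_{\mathfrak{B}}$, and define $\psi:\mathbb{C}^{2}\times\mathbb{D}\to\mathbb{C}$ by $\psi(r,s;z)=r^{\delta}+\beta s/r^{n}$. Since $|\mathfrak{B}(\zeta)|=\omega(\theta)\geq\omega(\pi)=\sqrt{2/(1+e^{2})}>0$, the quotient is well-defined at every boundary point. By the admissibility lemma recalled in the preliminaries, it suffices to establish $\psi\in\Psi(\Omega_{\mathfrak{B}},\mathfrak{B})$, i.e., that
\[
w\;:=\;(\mathfrak{B}(\zeta))^{\delta}\;+\;\beta\,\frac{m\zeta\mathfrak{B}'(\zeta)}{(\mathfrak{B}(\zeta))^{n}}
\]
lies outside $\Omega_{\mathfrak{B}}$ for every $\zeta=e^{i\theta}\in\partial\mathbb{D}$ and $m\geq 1$. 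By Lemma \ref{subradi}, this reduces to the single estimate $|w|\geq R_{0}$.

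Invoking (\ref{ot})--(\ref{dt}) together with the reverse triangle inequality and the bounds $\omega(\theta)\leq\omega_{\theta_{0}}$ and $m\geq 1$, I obtain
\[
|w|\;\geq\;|\beta|\,m\,\frac{d(\theta)}{\omega(\theta)^{n}}\;-\;\omega(\theta)^{\delta}\;\geq\;|\beta|\,F(\theta)\;-\;\omega_{\theta_{0}}^{\delta},\qquad F(\theta):=\frac{d(\theta)}{\omega(\theta)^{n}}.
\]
Explicitly,
\[
F(\theta)\;=\;(\sqrt{2})^{\,1-n}\,e^{-2\cos\theta}\bigl(1+e^{-4\cos\theta}+2e^{-2\cos\theta}\cos(2\sin\theta)\bigr)^{(n-3)/4},
\]
and the identity $1+e^{-4}+2e^{-2}=e^{-4}(1+e^{2})^{2}$ yields
\[
F(0)\;=\;\frac{(\sqrt{2}e)^{1-n}}{(1+e^{2})^{(3-n)/2}},
\]
which is precisely the reciprocal of the coefficient appearing in the hypothesis on $|\beta|$. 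Provided $\min_{\theta}F(\theta)=F(0)$, the hypothesis then forces $|\beta|F(0)\geq R_{0}+\omega_{\theta_{0}}^{\delta}$, so $|w|\geq R_{0}>\sqrt{2}$, and the admissibility lemma together with Lemma \ref{subradi} delivers $p\prec\mathfrak{B}$.

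The main obstacle is establishing $\min_{\theta\in[0,2\pi)}F(\theta)=F(0)$ for every positive integer $n$. The first factor $e^{-2\cos\theta}$ is globally minimized at $\theta=0$, while the second factor $A(\theta)^{(n-3)/4}$, with $A(\theta):=1+e^{-4\cos\theta}+2e^{-2\cos\theta}\cos(2\sin\theta)$, is identically $1$ when $n=3$ and otherwise must be analyzed in tandem with $e^{-2\cos\theta}$: for $n<3$ the exponent is negative, so $A^{(n-3)/4}$ competes with $e^{-2\cos\theta}$, whereas for $n>3$ the two factors cooperate near $\theta=0$. A differentiation argument in the spirit of the monotonicity discussion already carried out for $d$ and $\omega$ in the preliminaries should reveal that $F(0)$ is the global minimum in every case; once this is verified, the rest of the proof is routine.
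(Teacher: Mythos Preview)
Your proposal is correct and follows essentially the same route as the paper's proof: the paper's $\chi_n(\theta)$ is exactly your $F(\theta)$, and the paper likewise reduces everything to the minimization claim $\min_\theta\chi_n(\theta)=\chi_n(0)$, which it asserts as ``a simple computation'' without further detail. Your identification of this minimization as the only nontrivial step, together with your structural remarks on the two factors, actually makes the dependence on this claim more transparent than the paper's treatment.
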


\begin{proof}
   Let $q(z)=\sqrt{2/(1+e^{-2z})}=:\mathfrak{B}(z)$ and $\Omega=q(\mathbb{D})=\Omega_{\mathfrak{B}}.$ Now consider $\psi:\mathbb{C}^2\times\mathbb{D}\to\mathbb{C}$ be as $\psi(r,s;z)=r^\delta+\beta s/r^n.$ For $\psi$ to lie in the admissible class $\Psi(\Omega_{\mathfrak{B}},\mathfrak{B}),$ we must have   $$\psi\left(\sqrt{\dfrac{2}{1+e^{-2e^{i \theta}})}},\dfrac{\sqrt{2}m e^{i \theta}e^{-2e^{i \theta}}}{(1+e^{-2e^{i \theta}})^{3/2}};z\right)\not\in\Omega_{\mathfrak{B}}\qquad(z\in\mathbb{D},0\leq\theta<2\pi,m\geq1),$$ which is equivalent to  $$\left|\log\left(\dfrac{(r^\delta+\beta s/r^n)^2}{2-(r^\delta+\beta s/r^n)^2}\right)\right|\geq2.$$ 
    We now use the Lemma \ref{subradi} to prove the result, for which,  let us assume $w=r^\delta+\beta s/r^n$.
From (\ref{ot}), we have   $|r|=\omega(\theta)\leq\omega_{\theta_0}$. Thus
\begin{align*}
  |w|&= \bigg |r^\delta+\beta \dfrac{s}{r^n}\bigg|\\&\geq |\beta| \left(\dfrac{2^{\frac{1-n}{2}}me^{-2\cos\theta}}{(1+e^{-4\cos\theta}+2e^{-2\cos\theta}\cos(2\sin\theta))^{\frac{3-n}{4}}}\right)-(\omega(\theta))^\delta\\
    &\geq |\beta|m\chi_n(\theta)-(\omega(\theta))^\delta,
\end{align*}
where $$\chi_n(\theta):=\dfrac{2^{\frac{1-n}{2}}e^{-2\cos\theta}}{(1+e^{-4\cos\theta}+2e^{-2\cos\theta}\cos(2\sin\theta))^{\frac{3-n}{4}}}.$$ After a simple computation we obtain that $\min\chi_n(\theta)=\chi_n(0)$ for all $n\geq1.$
Since $m\geq1$ and  $|\beta| \geq (R_0+\omega_{\theta_{0}}^\delta)(1+e^2)^{(3-n)/2}/(\sqrt{2}e)^{1-n}$, we have \begin{equation*}
    |w|\geq |\beta|\left(\dfrac{(\sqrt{2}e)^{1-n}}{(1+e^2)^{\frac{3-n}{2}}}\right)-\omega_{\theta_{0}}^\delta\geq R_0>\sqrt{2}.
\end{equation*} Therefore, the result follows immediately from Lemma \ref{subradi}.
\end{proof}
\begin{corollary}
    Let $n$ be a positive integer, $\delta\in\{0,1\}$, $R_{0}=e\sqrt{2/(e^2-1)}$, $\omega_{\theta_0}\simeq1.438$ and  $\beta\in\mathbb{C}$ such that $|\beta| \geq (R_0+\omega_{\theta_0}^\delta)(1+e^2)^{(3-n)/2}/(\sqrt{2}e)^{1-n}$. Suppose $f$ be a function in $\mathcal{A}$ such that
    \begin{equation*}
        \left(\dfrac{zf'(z)}{f(z)}\right)^{\delta}+\beta\left(\dfrac{zf'(z)}{f(z)}\right)^{1-n}\left(1+\dfrac{zf''(z)}{f'(z)}-\dfrac{zf'(z)}{f(z)}\right)\prec  \mathfrak{B}(z),
    \end{equation*}then $f\in\mathcal{S}^*_{\mathfrak{B}}.$
\end{corollary}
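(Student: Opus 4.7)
The plan is to reduce this corollary directly to Theorem \ref{1stthm2} via the standard substitution $p(z) := zf'(z)/f(z)$. Since $f \in \mathcal{A}$, the function $p$ belongs to $\mathcal{H}$ (it is analytic on $\mathbb{D}$ with $p(0)=1$), so Theorem \ref{1stthm2} is applicable provided the subordinand $(p(z))^\delta + \beta\, zp'(z)/(p(z))^n$ matches the expression assumed in the corollary.

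The only nontrivial step is a logarithmic-derivative identity. Taking $\log p(z) = \log z + \log f'(z) - \log f(z)$ and multiplying through by $z$ after differentiation gives
$$\dfrac{zp'(z)}{p(z)} = 1 + \dfrac{zf''(z)}{f'(z)} - \dfrac{zf'(z)}{f(z)}.$$
Multiplying by $p(z)$ and then dividing by $(p(z))^n$ yields
$$\beta\,\dfrac{zp'(z)}{(p(z))^n} = \beta\left(\dfrac{zf'(z)}{f(z)}\right)^{1-n}\!\left(1 + \dfrac{zf''(z)}{f'(z)} - \dfrac{zf'(z)}{f(z)}\right),$$
so that
$$(p(z))^\delta + \beta\,\dfrac{zp'(z)}{(p(z))^n} = \left(\dfrac{zf'(z)}{f(z)}\right)^{\delta} + \beta\left(\dfrac{zf'(z)}{f(z)}\right)^{1-n}\!\left(1 + \dfrac{zf''(z)}{f'(z)} - \dfrac{zf'(z)}{f(z)}\right).$$
With this identification, the hypothesis of the corollary is literally the hypothesis of Theorem \ref{1stthm2}.

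Finally, since the bound imposed on $|\beta|$ is exactly the one required in Theorem \ref{1stthm2}, that theorem applies and delivers $p(z) \prec \mathfrak{B}(z)$, i.e.\ $zf'(z)/f(z) \prec \mathfrak{B}(z)$, which is the defining condition for $f \in \mathcal{S}^*_{\mathfrak{B}}$. There is no real obstacle here: the corollary is purely a rephrasing of the theorem through the substitution $p = zf'/f$, and the only care needed is to correctly track the exponent $1-n$ that appears when dividing $zp'(z) = p(z)\bigl(1 + zf''(z)/f'(z) - zf'(z)/f(z)\bigr)$ by $(p(z))^n$.
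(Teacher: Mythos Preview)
Your proof is correct and follows exactly the same approach as the paper: substitute $p(z)=zf'(z)/f(z)$, verify the identity $(p(z))^\delta+\beta\,zp'(z)/(p(z))^n$ equals the given expression, and invoke Theorem~\ref{1stthm2}. In fact, you supply more detail than the paper does, explicitly deriving the logarithmic-derivative identity that the paper simply asserts.
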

\begin{proof}
    Let $p(z)=zf'(z)/f(z)$, then 
    $$ (p(z))^\delta+\beta \dfrac{zp'(z)}{(p(z))^n}=\left(\dfrac{zf'(z)}{f(z)}\right)^{\delta}+\beta\left(\dfrac{zf'(z)}{f(z)}\right)^{1-n}\left(1+\dfrac{zf''(z)}{f'(z)}-\dfrac{zf'(z)}{f(z)}\right).$$
    Now the result  follows by an application of Theorem \ref{1stthm2}.
\end{proof}

\begin{theorem}\label{1stthm4}
Let $R_{0}=e\sqrt{2/(e^2-1)}$, $\omega_{\theta_0}\simeq1.438$ and $\beta,\gamma\in\mathbb{C}$   such that $\beta\neq0$ with $|\beta|\omega_{\theta_0} +|\gamma|\leq \sqrt{2}e/((1+e^2)^{3/2}(R_0+\omega_{\theta_0}))$. Suppose $p(z)\in\mathcal{H}$ and satisfies the subordination
\begin{equation*}
    p(z)+ \dfrac{zp'(z)}{\beta p(z)+\gamma}\prec \mathfrak{B}(z),
\end{equation*}then $p(z)\prec \mathfrak{B}(z).$
\end{theorem}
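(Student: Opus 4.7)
The plan is to apply the admissibility framework exactly as in Theorems \ref{1stthm1} and \ref{1stthm2}, with the choice $q(z) = \mathfrak{B}(z)$ and $\Omega = \Omega_\mathfrak{B}$. I would define $\psi : \mathbb{C}^2 \times \mathbb{D} \to \mathbb{C}$ by $\psi(r,s;z) = r + s/(\beta r + \gamma)$, so the hypothesis reads $\psi(p(z), zp'(z); z) \prec \mathfrak{B}(z)$. It then suffices to check $\psi \in \Psi(\Omega_\mathfrak{B}, \mathfrak{B})$, i.e.\ that
$$w := \psi\!\left(\mathfrak{B}(\zeta),\, m\zeta\mathfrak{B}'(\zeta);\, z\right) \notin \Omega_\mathfrak{B}$$
for every $z \in \mathbb{D}$, $\zeta = e^{i\theta}$, and $m \geq 1$. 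By Lemma \ref{subradi} this reduces to the single numerical inequality $|w| \geq R_0$.

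To bound $|w|$ from below, I would apply the reverse triangle inequality twice:
$$|w| \;\geq\; \left|\frac{s}{\beta r + \gamma}\right| - |r| \;\geq\; \frac{|s|}{|\beta|\,|r| + |\gamma|} - |r|.$$
From (\ref{ot}) and the fact that $\omega$ attains its maximum $\omega_{\theta_0}$, one has $|r| = \omega(\theta) \leq \omega_{\theta_0}$, and from (\ref{dt}) together with the remark that $\min d(\theta) = d(0)$, the identity $1+e^{-4}+2e^{-2} = (1+e^{-2})^2$ gives the closed form $d(0) = \sqrt{2}e/(1+e^2)^{3/2}$. Since $m \geq 1$, we obtain $|s| = m\,d(\theta) \geq d(0)$. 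Substituting and rewriting the hypothesis as
$$\frac{d(0)}{|\beta|\omega_{\theta_0} + |\gamma|} \;\geq\; R_0 + \omega_{\theta_0},$$
I would conclude $|w| \geq R_0$, and then Lemma \ref{subradi} closes the argument.

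The algebra runs parallel to the two preceding theorems, so there is no serious analytic obstacle. The one subtlety worth flagging is the possibility $\beta r + \gamma = 0$ for some $\zeta \in \partial\mathbb{D}$; in that event $\psi$ has a pole there, so $w = \infty \notin \Omega_\mathfrak{B}$ and the admissibility condition holds vacuously. The substantive step is really bookkeeping: recognizing that the constant $\sqrt{2}e/(1+e^2)^{3/2}$ appearing in the hypothesis is exactly the closed form of $\min d(\theta)$, which is what makes the bound on $|\beta|\omega_{\theta_0} + |\gamma|$ precisely tight for the desired conclusion $|w| \geq R_0$.
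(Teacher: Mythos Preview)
Your proposal is correct and follows essentially the same route as the paper's own proof: set $\psi(r,s;z)=r+s/(\beta r+\gamma)$, reduce admissibility via Lemma~\ref{subradi} to $|w|\geq R_0$, and obtain this from the reverse triangle inequality together with $|r|\leq\omega_{\theta_0}$, $|s|\geq m\,d(0)=\sqrt{2}e/(1+e^2)^{3/2}$, and the hypothesis on $|\beta|\omega_{\theta_0}+|\gamma|$. Your explicit identification of the closed form of $d(0)$ and your remark on the pole case $\beta r+\gamma=0$ are welcome clarifications that the paper leaves implicit.
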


\begin{proof}
   Let $q(z)=\sqrt{2/(1+e^{-2z})}=:\mathfrak{B}(z)$ and $\Omega=q(\mathbb{D})=\Omega_{\mathfrak{B}}.$ Now consider $\psi:\mathbb{C}^2\times\mathbb{D}\to\mathbb{C}$ be as $\psi(r,s;z)=r+ s/(\beta r+\gamma).$ For $\psi$ to lie in the admissible class $\Psi(\Omega_{\mathfrak{B}},\mathfrak{B}),$ we must have $$\psi\left(\sqrt{\dfrac{2}{1+e^{-2e^{i \theta}})}},\dfrac{\sqrt{2}m e^{i \theta}e^{-2e^{i \theta}}}{(1+e^{-2e^{i \theta}})^{3/2}};z\right)\not\in\Omega_{\mathfrak{B}}\qquad(z\in\mathbb{D},0\leq\theta<2\pi,m\geq1),$$ which is equivalent to $$\left|\log\left(\dfrac{(r+ s/(\beta r+\gamma))^2}{2-(r+ s/(\beta r+\gamma))^2}\right)\right|\geq2.$$ 
    We now use the Lemma \ref{subradi} to prove the result, for which,  let us assume $w=r+ s/(\beta r+\gamma)$.
From (\ref{ot}) and (\ref{dt}), we have $|r|=\omega(\theta)\leq\omega_{\theta_0}$ and $|s|/m=d(\theta)\geq\min d(\theta)=d(0)$. Thus
\begin{align*}
  |w|&= \bigg |r+ \dfrac{s}{\beta r+\gamma}\bigg|\\&\geq \dfrac{|s|}{|\beta||r|+|\gamma|}-|r|\\
    &\geq \dfrac{m d(0)}{|\beta|\omega_{\theta_0}+|\gamma|}-\omega_{\theta_0}.
\end{align*}
Since $m\geq1$ and $|\beta|\omega_{\theta_0} +|\gamma|\leq \sqrt{2}e/((1+e^2)^{3/2}(R_0+\omega_{\theta_0}))$, we have
 \begin{equation*}
    |w|\geq  \dfrac{\sqrt{2}e}{(1+e^2)^{3/2}(|\beta|\omega_{\theta_0}+|\gamma|)}-\omega_{\theta_0}\geq R_0>\sqrt{2},
\end{equation*} therefore the result follows at once by Lemma \ref{subradi}.
\end{proof}

\begin{corollary}
    Let  $R_{0}=e\sqrt{2/(e^2-1)}$, $\omega_{\theta_0}\simeq1.438$ and $\beta,\gamma\in\mathbb{C}$   such that $\beta\neq0$ with $\omega_{\theta_0}|\beta| +|\gamma|\leq \sqrt{2}e/((1+e^2)^{3/2}(R_0+\omega_{\theta_0}))$. Suppose $f$ be a function in $\mathcal{A}$ such that
    \begin{equation*}
        \dfrac{zf'(z)}{f(z)}\left(1+\left(\beta\dfrac{zf'(z)}{f(z)}+\gamma\right)^{-1} \left(1+\dfrac{zf''(z)}{f'(z)}-\dfrac{zf'(z)}{f(z)}\right)\right)\prec\mathfrak{B}(z),
    \end{equation*}then $f\in\mathcal{S}^*_{\mathfrak{B}}.$
\end{corollary}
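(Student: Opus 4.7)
The plan is to derive the corollary as a direct consequence of Theorem~\ref{1stthm4} by making the standard substitution $p(z) = zf'(z)/f(z)$, which sends the differential subordination on $f$ into precisely the one handled by the theorem.

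First, I would verify that $p \in \mathcal{H}$: since $f \in \mathcal{A}$ has the expansion $f(z) = z + a_2 z^2 + \cdots$, the quotient $zf'(z)/f(z)$ is analytic in $\mathbb{D}$ with $p(0) = 1$, so $p \in \mathcal{H}$. Next, a short logarithmic differentiation of $p(z) = zf'(z)/f(z)$ yields
\begin{equation*}
    zp'(z) = \frac{zf'(z)}{f(z)}\left(1 + \frac{zf''(z)}{f'(z)} - \frac{zf'(z)}{f(z)}\right),
\end{equation*}
which is the routine identity that links the second-order differential operator on $f$ to $zp'(z)$.

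With this identity in hand, I would substitute into the expression $p(z) + zp'(z)/(\beta p(z) + \gamma)$ and factor out $zf'(z)/f(z)$ from the resulting sum. This gives exactly
\begin{equation*}
    p(z) + \frac{zp'(z)}{\beta p(z) + \gamma} = \frac{zf'(z)}{f(z)}\left(1 + \left(\beta\frac{zf'(z)}{f(z)} + \gamma\right)^{-1}\!\left(1 + \frac{zf''(z)}{f'(z)} - \frac{zf'(z)}{f(z)}\right)\right),
\end{equation*}
so the hypothesis of the corollary rewrites as $p(z) + zp'(z)/(\beta p(z) + \gamma) \prec \mathfrak{B}(z)$.

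Under the stated bound $\omega_{\theta_0}|\beta| + |\gamma| \leq \sqrt{2}e/((1+e^2)^{3/2}(R_0+\omega_{\theta_0}))$, Theorem~\ref{1stthm4} applies directly and yields $p(z) \prec \mathfrak{B}(z)$, i.e., $zf'(z)/f(z) \prec \mathfrak{B}(z)$, which is the definition of $f \in \mathcal{S}^*_{\mathfrak{B}}$. There is no real obstacle here; the only thing to check carefully is that the algebraic rearrangement of $p + zp'/(\beta p + \gamma)$ into the form given in the corollary is exact, and that $\beta p(z) + \gamma$ does not vanish where needed --- the latter is implicit in the fact that the subordinand on the left of the corollary's hypothesis is assumed to be analytic in $\mathbb{D}$.
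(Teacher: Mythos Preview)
Your proof is correct and follows essentially the same approach as the paper: set $p(z)=zf'(z)/f(z)$, verify the algebraic identity rewriting the hypothesis as $p(z)+zp'(z)/(\beta p(z)+\gamma)\prec\mathfrak{B}(z)$, and then invoke Theorem~\ref{1stthm4}. Your version is slightly more detailed in checking $p\in\mathcal{H}$ and noting the implicit nonvanishing of $\beta p(z)+\gamma$, but the argument is the same.
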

\begin{proof}
    Let $p(z)=zf'(z)/f(z)$, then
    $$  p(z)+ \dfrac{zp'(z)}{\beta p(z)+\gamma}=\dfrac{zf'(z)}{f(z)}\left(1+\left(\beta\dfrac{zf'(z)}{f(z)}+\gamma\right)^{-1} \left(1+\dfrac{zf''(z)}{f'(z)}-\dfrac{zf'(z)}{f(z)}\right)\right).$$
    Therefore the result holds using Theorem \ref{1stthm4}.
\end{proof}
\begin{theorem}\label{janthm1}
Let $n\in\mathbb{N}$, $\delta\in\{0,1\}$, $\beta\in\mathbb{C}\backslash\{0\}$,  $\omega_{\theta_0}\simeq1.438$ and $-1< B\leq0< A\leq1$. Suppose $p\in\mathcal{H}$ such that it satisfies
 \begin{equation*}
(p(z))^\delta+\beta (zp'(z))^n\prec \frac{1+Az}{1+Bz}
\end{equation*}
where
\begin{equation}\label{janousk-ab1}
 |\beta|\geq\left(\dfrac{(1+e^2)^{3/2}}{\sqrt{2}e}\right)^n \left(\dfrac{1+A}{1+B}+\omega_{\theta_0}^\delta\right)
\end{equation}
then $p(z)\prec\mathfrak{B}(z)$.
\end{theorem}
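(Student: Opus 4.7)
The plan is to parallel the proof of Theorem \ref{1stthm1}, swapping Lemma \ref{subradi} for Lemma \ref{janlemma} since the target ``forbidden region'' $\Omega$ is now the Janowski image disk rather than the bean-shaped region. Concretely, I set $q(z)=\mathfrak{B}(z)$ and $\Omega=\{w\in\mathbb{C}:|(w-1)/(A-Bw)|<1\}$, the image of $\mathbb{D}$ under $(1+Az)/(1+Bz)$, and I define $\psi(r,s;z)=r^{\delta}+\beta s^{n}$. By the Miller--Mocanu admissibility lemma it suffices to check that $\psi\in\Psi(\Omega,\mathfrak{B})$, i.e.\ that
\[
w:=r^{\delta}+\beta s^{n}\notin\Omega
\]
whenever $r=\mathfrak{B}(\zeta)$ and $s=m\zeta\mathfrak{B}'(\zeta)$ for some $\zeta\in\partial\mathbb{D}$ and $m\geq 1$.

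Because $\Omega=\{w:|(w-1)/(A-Bw)|<1\}$, Lemma \ref{janlemma} reduces the non-membership $w\notin\Omega$ to the single inequality $|w|\geq (1+A)/(1+B)$. To establish this I would invoke the reverse triangle inequality together with the extremal estimates recorded just after \eqref{ot} and \eqref{dt}, namely $|r|\leq\omega_{\theta_{0}}$ and $|s|\geq m\,d(0)\geq d(0)$, obtaining
\[
|w|\ \geq\ |\beta|\,|s|^{n}-|r|^{\delta}\ \geq\ |\beta|\,d(0)^{n}-\omega_{\theta_{0}}^{\delta}.
\]
A short manipulation using the perfect square $1+e^{-4}+2e^{-2}=(1+e^{-2})^{2}$ yields the closed form $d(0)=\sqrt{2}\,e/(1+e^{2})^{3/2}$, and this is exactly what turns the hypothesis \eqref{janousk-ab1} into the statement $|\beta|\,d(0)^{n}\geq (1+A)/(1+B)+\omega_{\theta_{0}}^{\delta}$. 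Combining the two displays gives $|w|\geq (1+A)/(1+B)$; Lemma \ref{janlemma} then produces $w\notin\Omega$, completing the admissibility verification, and $p\prec\mathfrak{B}$ follows.

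There is no genuine obstacle here: the architecture is identical to that of Theorem \ref{1stthm1}, and the constant in \eqref{janousk-ab1} has been reverse-engineered so that the triangle-inequality bound clears the new threshold $(1+A)/(1+B)$ coming from Lemma \ref{janlemma} rather than the threshold $R_{0}$ coming from Lemma \ref{subradi}. The only step that is not automatic is the algebraic simplification of $d(0)$ via the perfect-square identity above, and one should also note that the regime $-1<B\leq 0<A\leq 1$ guarantees $(1+A)/(1+B)>1$, so Lemma \ref{janlemma} applies with its hypothesis $|w|>1$ automatically satisfied.
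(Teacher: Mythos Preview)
Your proposal is correct and follows essentially the same route as the paper: set $q=\mathfrak{B}$, $\Omega=\{w:|(w-1)/(A-Bw)|<1\}$, $\psi(r,s;z)=r^{\delta}+\beta s^{n}$, bound $|w|\geq|\beta|\,d(0)^{n}-\omega_{\theta_0}^{\delta}$ via the reverse triangle inequality and the extremal values of $\omega(\theta)$ and $d(\theta)$, and then invoke Lemma~\ref{janlemma} with $d(0)=\sqrt{2}\,e/(1+e^{2})^{3/2}$. Your added remarks on the perfect-square simplification of $d(0)$ and on $(1+A)/(1+B)>1$ are consistent with (and slightly more explicit than) the paper's argument.
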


\begin{proof}
	Let $q(z)=\sqrt{2/(1+e^{-2z})}=:\mathfrak{B}(z)$ and $\Omega=\{w\in\mathbb{C}:|(w-1)/(A-Bw)|<1\}$. Let $\psi:\mathbb{C}^2\times\mathbb{D}\to\mathbb{C}$ be defined as $\psi(r,s;z)=r^\delta+\beta s^n$ where $r=\mathfrak{B}(\xi),$ $s=m\xi\mathfrak{B}'(\xi)=m\sqrt{2}\xi e^{-2\xi}/(1+e^{-2\xi})^{3/2}$ for $z\in\mathbb{D}$, $\xi\in\overline{\mathbb{D}}$ and $m\geq1$.
	 For $\psi$ to be an admissible function and for $\psi\in\Psi[\Omega,\mathfrak{B}]$ we need to show $\psi(\mathbb{D})\not\subseteq\Omega$, which  is equivalent to
	\begin{gather*}
	\left|\frac{r^\delta+\beta s^n-1}{A-B(r^\delta+\beta s^n)}\right|\geq1.
	\end{gather*} 
  We now use the Lemma \ref{janlemma} to prove the result, for which,  let us assume $w=r^\delta+ \beta s^n$.
 From (\ref{ot}) and (\ref{dt}), we have  $|r|=\omega(\theta)\leq\omega_{\theta_0}$ and $|s|/m=d(\theta)\geq\min d(\theta)=d(0)$. Thus
	\begin{align*}
  |w|&= |r^\delta+\beta s^n|\\&\geq |\beta||s|^n-|r|^\delta\\
    &\geq |\beta|m^n (d(0))^n-\omega_{\theta_0}^\delta
\end{align*}
Since $m\geq1$ and using (\ref{janousk-ab1}), we have
 \begin{equation*}
    |w|\geq  |\beta|\left(\dfrac{\sqrt{2}e}{(1+e^2)^{3/2}}\right)^n-\omega_{\theta_0}^\delta\geq \dfrac{1+A}{1+B}>1,
\end{equation*}therefore the result follows at once from Lemma \ref{janlemma}.
\end{proof}
\begin{corollary}
    Let $n\in\mathbb{N}$, $\delta\in\{0,1\}$ and $\beta\in\mathbb{C}\backslash\{0\}$ such that $\beta$ satisfies inequality (\ref{janousk-ab1}). Moreover, let $\omega_{\theta_0}\simeq1.438$ and $-1< B\leq0< A\leq1$. Suppose $f\in\mathcal{A}$ such that
    \begin{equation*}
         \left(\dfrac{zf'(z)}{f(z)}\right)^\delta+\beta\left(\dfrac{zf'(z)}{f(z)}\left(1+\dfrac{zf''(z)}{f'(z)}-\dfrac{zf'(z)}{f(z)}\right)\right)^n\prec  \frac{1+Az}{1+Bz},
    \end{equation*}then $f\in\mathcal{S}^*_{\mathfrak{B}}.$
\end{corollary}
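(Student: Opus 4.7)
The plan is simply to reduce the statement to Theorem \ref{janthm1} via the standard Ma–Minda substitution $p(z)=zf'(z)/f(z)$. First I would observe that since $f\in\mathcal{A}$ satisfies $f(0)=0$ and $f'(0)=1$, the quotient $p(z)=zf'(z)/f(z)$ is analytic near the origin with $p(0)=1$, so $p\in\mathcal{H}$ and the hypotheses of Theorem \ref{janthm1} are available to us for this choice of $p$.

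The only computation required is the identity
\begin{equation*}
zp'(z)=\dfrac{zf'(z)}{f(z)}\left(1+\dfrac{zf''(z)}{f'(z)}-\dfrac{zf'(z)}{f(z)}\right),
\end{equation*}
which follows by logarithmic differentiation: from $\log p(z)=\log z+\log f'(z)-\log f(z)$ one gets $p'(z)/p(z)=1/z+f''(z)/f'(z)-f'(z)/f(z)$, and multiplying by $zp(z)$ yields the claim. With this identity in hand, the left-hand side of the subordination in the hypothesis of the corollary is exactly $(p(z))^\delta+\beta(zp'(z))^n$.

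Hence the hypothesis becomes $(p(z))^\delta+\beta(zp'(z))^n\prec(1+Az)/(1+Bz)$, and since $\beta$ is assumed to satisfy inequality (\ref{janousk-ab1}), Theorem \ref{janthm1} applies and gives $p(z)\prec\mathfrak{B}(z)$, i.e.\ $zf'(z)/f(z)\prec\mathfrak{B}(z)$, which is precisely the defining condition of $\mathcal{S}^*_{\mathfrak{B}}$. There is no real obstacle here: all the analytic work, including the admissibility verification via Lemma \ref{janlemma} and the estimate on $|w|$, has already been carried out in the proof of Theorem \ref{janthm1}, and the present corollary is only a restatement of that theorem in terms of $f$ rather than $p$, exactly paralleling the corollaries that follow Theorems \ref{1stthm1}, \ref{1stthm2}, and \ref{1stthm4}.
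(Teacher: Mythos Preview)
Your proposal is correct and follows essentially the same approach as the paper: set $p(z)=zf'(z)/f(z)$, verify the identity $zp'(z)=\dfrac{zf'(z)}{f(z)}\left(1+\dfrac{zf''(z)}{f'(z)}-\dfrac{zf'(z)}{f(z)}\right)$, and invoke Theorem~\ref{janthm1}. You actually supply more detail than the paper (the check that $p\in\mathcal{H}$ and the logarithmic differentiation), but the structure is identical.
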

\begin{proof}
    Let $p(z)=zf'(z)/f(z)$, then 
    $$(p(z))^\delta+\beta (zp'(z))^n=\left(\dfrac{zf'(z)}{f(z)}\right)^\delta+\beta\left(\dfrac{zf'(z)}{f(z)}\left(1+\dfrac{zf''(z)}{f'(z)}-\dfrac{zf'(z)}{f(z)}\right)\right)^n.$$
    Now the result follows directly from Theorem \ref{janthm1}.
\end{proof}

\begin{theorem}\label{janthm2}
Let $n\in\mathbb{N}$, $\delta\in\{0,1\}$, $\beta\in\mathbb{C}\backslash\{0\}$,  $\omega_{\theta_0}\simeq1.438$ and $-1\leq B<0\leq A\leq1$. Suppose $p\in\mathcal{H}$ such that it satisfies
 \begin{equation*}
(p(z))^\delta+\beta \frac{zp'(z)}{(p(z))^n}\prec \frac{1+Az}{1+Bz},
\end{equation*}
where
\begin{equation}\label{janousk-ab}
 |\beta|\geq\left(\dfrac{(1+e^2)^{\frac{3-n}{2}}}{(\sqrt{2}e)^{1-n}}\right) \left(\dfrac{1+A}{1+B}+\omega_{\theta_0}^\delta\right).
\end{equation}
Then $p(z)\prec\mathfrak{B}(z)$.
\end{theorem}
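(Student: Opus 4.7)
The plan is to mirror the proof of Theorem \ref{1stthm2} almost verbatim, but replace the target set $\Omega_{\mathfrak{B}}$ with the Janowski disk from Theorem \ref{janthm1} and invoke Lemma \ref{janlemma} in place of Lemma \ref{subradi}. Concretely, I would set $q(z)=\mathfrak{B}(z)$, take $\Omega=\{w\in\mathbb{C}:|(w-1)/(A-Bw)|<1\}$, and define $\psi(r,s;z)=r^\delta+\beta s/r^n$. The Miller--Mocanu framework then reduces the theorem to showing that $\psi\in\Psi(\Omega,\mathfrak{B})$, i.e. that for $r=\mathfrak{B}(\zeta)$, $s=m\zeta\mathfrak{B}'(\zeta)$ with $\zeta=e^{i\theta}$, $m\geq1$, the admissibility inequality
$$\left|\frac{r^\delta+\beta s/r^n-1}{A-B(r^\delta+\beta s/r^n)}\right|\geq 1$$
holds for every $\theta\in[0,2\pi)$.

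Writing $w:=r^\delta+\beta s/r^n$, Lemma \ref{janlemma} reduces the admissibility check to the pointwise bound $|w|\geq(1+A)/(1+B)$. The estimate on $|w|$ is already contained in the proof of Theorem \ref{1stthm2}: using $|r|=\omega(\theta)\leq\omega_{\theta_0}$ together with the explicit formulas (\ref{ot}) and (\ref{dt}), a triangle-inequality step yields
$$|w|\geq|\beta|m\,\chi_n(\theta)-(\omega(\theta))^\delta,\qquad \chi_n(\theta)=\dfrac{2^{(1-n)/2}e^{-2\cos\theta}}{(1+e^{-4\cos\theta}+2e^{-2\cos\theta}\cos(2\sin\theta))^{(3-n)/4}},$$
and $\min_\theta \chi_n(\theta)=\chi_n(0)=(\sqrt{2}e)^{1-n}/(1+e^2)^{(3-n)/2}$ was verified there for all $n\geq 1$.

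Plugging in $m\geq 1$, $|r|\leq\omega_{\theta_0}$, and the hypothesis (\ref{janousk-ab}) produces
$$|w|\geq|\beta|\,\chi_n(0)-\omega_{\theta_0}^\delta\geq\dfrac{1+A}{1+B},$$
and since $(1+A)/(1+B)>1$ under the stated sign conditions on $A,B$, Lemma \ref{janlemma} immediately gives $|(w-1)/(A-Bw)|\geq1$, completing the admissibility check and hence the theorem via the Miller--Mocanu subordination lemma. I do not foresee a real obstacle here: the computation is a mechanical merger of the $\chi_n$ bound from Theorem \ref{1stthm2} with the Janowski reduction from Theorem \ref{janthm1}. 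The only item worth double-checking is that the sign conditions on $A$ and $B$ in the statement genuinely ensure both $A-B>0$ (needed so $(1+A)/(1+B)$ is well defined and exceeds $1$) and the hypothesis of Lemma \ref{janlemma}, so that the final implication legitimately fires.
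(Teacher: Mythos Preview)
Your proposal is correct and follows essentially the same route as the paper's own proof: set $q=\mathfrak{B}$, $\Omega=\{w:|(w-1)/(A-Bw)|<1\}$, $\psi(r,s;z)=r^\delta+\beta s/r^n$, reduce admissibility via Lemma~\ref{janlemma} to $|w|\geq(1+A)/(1+B)$, and then reuse the $\chi_n(\theta)$ estimate with $\min_\theta\chi_n(\theta)=\chi_n(0)$ exactly as in Theorem~\ref{1stthm2}. Your closing caveat about the sign conditions is well placed: the statement here reads $-1\leq B<0\leq A\leq1$, whereas Lemma~\ref{janlemma} is formulated for $-1<B\leq0<A\leq1$, so strictly speaking the endpoint $B=-1$ (and the case $A=0$) must be excluded for the lemma to apply and for $(1+A)/(1+B)>1$ to hold---the paper glosses over this, but your instinct to flag it is right.
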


\begin{proof}
	Let $q(z)=\sqrt{2/(1+e^{-2z})}=:\mathfrak{B}(z)$ and $\Omega=\{w\in\mathbb{C}:|(w-1)/(A-Bw)|<1\}$. Let $\psi:\mathbb{C}^2\times\mathbb{D}\to\mathbb{C}$ be defined as $\psi(r,s;z)=r^\delta+\beta s/r^n$ where $r=\mathfrak{B}(\xi),$ $s=m\xi\mathfrak{B}'(\xi)=m\sqrt{2}\xi e^{-2\xi}/(1+e^{-2\xi})^{3/2}$ for $z\in\mathbb{D}$, $\xi\in\overline{\mathbb{D}}$ and $m\geq1$.
	 For $\psi$ to be an admissible function and for $\psi\in\Psi[\Omega,\mathfrak{B}]$ we need to show $\psi(\mathbb{D})\not\subseteq\Omega$,  which is equivalent to
	\begin{gather*}
	\left|\frac{r^\delta+\beta \frac{s}{r^n}-1}{A-B(r^\delta+\beta \frac{s}{r^n})}\right|\geq1.
	\end{gather*}  
  We now use the Lemma \ref{janlemma} to prove the result, for which,  let us assume $w=r^\delta+ \beta s/r^n$.
	From (\ref{ot}), we have   $|r|=\omega(\theta)\leq\omega_{\theta_0}$. Thus
\begin{align*}
  |w|&= \bigg |r^\delta+\beta \dfrac{s}{r^n}\bigg|\\&\geq |\beta| \left(\dfrac{2^{\frac{1-n}{2}}me^{-2\cos\theta}}{(1+e^{-4\cos\theta}+2e^{-2\cos\theta}\cos(2\sin\theta))^{\frac{3-n}{4}}}\right)-(\omega(\theta))^\delta\\
    &\geq |\beta|m\chi_n(\theta)-(\omega(\theta))^\delta,
\end{align*}
where $$\chi_n(\theta):=\dfrac{2^{\frac{1-n}{2}}e^{-2\cos\theta}}{(1+e^{-4\cos\theta}+2e^{-2\cos\theta}\cos(2\sin\theta))^{\frac{3-n}{4}}}.$$ After a simple computation we obtain that $\min\chi_n(\theta)=\chi_n(0)$ for all $n\geq1.$
Since $m\geq1$ and using (\ref{janousk-ab}), we have
 \begin{equation*}
    |w|\geq  |\beta|\left(\dfrac{(\sqrt{2}e)^{1-n}}{(1+e^2)^{\frac{3-n}{2}}}\right)-\omega_{\theta_0}^\delta\geq \dfrac{1+A}{1+B}>1,
\end{equation*}therefore the result follows from Lemma \ref{janlemma}.
\end{proof}

\begin{corollary}
   Let $n\in\mathbb{N}$, $\delta\in\{0,1\}$ and $\beta\in\mathbb{C}\backslash\{0\}$ such that $\beta$ satisfies inequality (\ref{janousk-ab}). Moreover, let  $\omega_{\theta_0}\simeq1.438$ and $-1< B\leq0<A\leq1$.  Suppose $f$ be a function in $\mathcal{A}$ such that
    \begin{equation*}
         \left(\dfrac{zf'(z)}{f(z)}\right)^{\delta}+\beta\left(\dfrac{zf'(z)}{f(z)}\right)^{1-n}\left(1+\dfrac{zf''(z)}{f'(z)}-\dfrac{zf'(z)}{f(z)}\right)\prec  \dfrac{1+Az}{1+Bz},
    \end{equation*}then $f\in\mathcal{S}^*_{\mathfrak{B}}.$
\end{corollary}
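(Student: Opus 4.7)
The plan is to reduce the assertion to Theorem \ref{janthm2} via the natural substitution $p(z)=zf'(z)/f(z)$. Once the equality between the expression in the hypothesis and the differential operator $(p(z))^{\delta}+\beta zp'(z)/(p(z))^{n}$ is established, the conclusion follows mechanically, exactly as in the corollary that accompanies Theorem \ref{1stthm2}.

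First I would observe that, since $f\in\mathcal{A}$, the quotient $p(z)=zf'(z)/f(z)$ is well-defined and lies in $\mathcal{H}$, with $p(0)=1$. Taking the logarithmic derivative gives
\begin{equation*}
\frac{zp'(z)}{p(z)}=1+\frac{zf''(z)}{f'(z)}-\frac{zf'(z)}{f(z)},
\end{equation*}
which is the standard identity underlying all such reductions. Multiplying by $(p(z))^{1-n}$ and rearranging yields
\begin{equation*}
\beta\,\frac{zp'(z)}{(p(z))^{n}}=\beta\left(\frac{zf'(z)}{f(z)}\right)^{1-n}\left(1+\frac{zf''(z)}{f'(z)}-\frac{zf'(z)}{f(z)}\right).
\end{equation*}
Adding $(p(z))^{\delta}=(zf'(z)/f(z))^{\delta}$ to both sides shows that the left-hand side of the subordination hypothesized on $f$ coincides, term for term, with the expression $(p(z))^{\delta}+\beta zp'(z)/(p(z))^{n}$ that appears in Theorem \ref{janthm2}.

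Since the hypothesis on $\beta$ in the corollary is precisely condition (\ref{janousk-ab}) of Theorem \ref{janthm2}, and since $-1<B\leq0<A\leq1$ (as required by that theorem), all the assumptions of Theorem \ref{janthm2} are satisfied by this $p$. Applying it gives $p(z)\prec \mathfrak{B}(z)$, i.e.\ $zf'(z)/f(z)\prec \mathfrak{B}(z)$, which is exactly the defining inclusion for $\mathcal{S}^{*}_{\mathfrak{B}}$; hence $f\in\mathcal{S}^{*}_{\mathfrak{B}}$.

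There is no real obstacle here: the identification is a one-line computation and the quantitative admissibility bound has already been absorbed into the statement of Theorem \ref{janthm2}. The only point worth flagging is the mild technicality that the expressions $(zf'(z)/f(z))^{\delta}$ and $(zf'(z)/f(z))^{1-n}$ be interpreted with the principal branch near $z=0$, which is unambiguous because $p(0)=1$ and $p$ is analytic in $\mathbb{D}$.
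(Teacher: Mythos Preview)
Your proof is correct and follows essentially the same approach as the paper: set $p(z)=zf'(z)/f(z)$, verify the identity $(p(z))^{\delta}+\beta\,zp'(z)/(p(z))^{n}=\left(zf'(z)/f(z)\right)^{\delta}+\beta\left(zf'(z)/f(z)\right)^{1-n}\left(1+zf''(z)/f'(z)-zf'(z)/f(z)\right)$, and invoke Theorem~\ref{janthm2}. You simply spell out the logarithmic-derivative step and the normalization $p(0)=1$ more explicitly than the paper does.
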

\begin{proof}
    Let $p(z)=zf'(z)/f(z)$, then 
    $$ (p(z))^\delta+\beta \dfrac{zp'(z)}{(p(z))^n}= \left(\dfrac{zf'(z)}{f(z)}\right)^{\delta}+\beta\left(\dfrac{zf'(z)}{f(z)}\right)^{1-n}\left(1+\dfrac{zf''(z)}{f'(z)}-\dfrac{zf'(z)}{f(z)}\right).$$
    Now the result follows by an application of Theorem \ref{janthm2}.
\end{proof}

\begin{theorem}\label{janthm3}
Let $\beta,\gamma\in\mathbb{C}$ such that $\beta\neq0$,  $\omega_{\theta_0}\simeq1.438$ and $-1< B\leq0< A\leq1$. Suppose $p\in\mathcal{H}$ such that it satisfies
 \begin{equation*}
p(z)+\frac{zp'(z)}{\beta p(z)+\gamma}\prec \frac{1+Az}{1+Bz}
\end{equation*} 
provided
\begin{equation}\label{janousk-ab2}
|\beta| \omega_{\theta_0}+|\gamma|\leq\left(\dfrac{(1+e^2)^{3/2}}{\sqrt{2}e} \left(\dfrac{1+A}{1+B}+\omega_{\theta_0}\right)\right)^{-1}
\end{equation}where 
Then $p(z)\prec\mathfrak{B}(z)$.
\end{theorem}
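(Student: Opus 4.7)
The plan is to mirror the template used in Theorem \ref{janthm1} and Theorem \ref{1stthm4}, but with the admissibility set dictated by the Janowski function and the test function $\psi(r,s;z)=r+s/(\beta r+\gamma)$. Specifically, I would take $q(z)=\mathfrak{B}(z)$ and $\Omega=\{w\in\mathbb{C}:|(w-1)/(A-Bw)|<1\}$, which represents the image of the subordinating function $(1+Az)/(1+Bz)$. To apply the admissibility lemma from Miller and Mocanu, I need to verify $\psi\in\Psi(\Omega,\mathfrak{B})$, namely that $\psi(r,s;z)\notin\Omega$ when $r=\mathfrak{B}(\zeta)$ and $s=m\zeta\mathfrak{B}'(\zeta)$ with $m\geq 1$ and $\zeta=e^{i\theta}$. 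By Lemma \ref{janlemma}, it suffices to show
\[
|w|=\left|r+\dfrac{s}{\beta r+\gamma}\right|\geq \dfrac{1+A}{1+B}.
\]

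The key estimate would proceed by the triangle inequality in the form
\[
|w|\geq \dfrac{|s|}{|\beta||r|+|\gamma|}-|r|,
\]
after which I would use the bounds $|r|=\omega(\theta)\leq\omega_{\theta_0}$ from (\ref{ot}) and $|s|/m=d(\theta)\geq d(0)=\sqrt{2}e/(1+e^2)^{3/2}$ from (\ref{dt}), noting that $d(0)$ is precisely the minimum of $d(\theta)$ over $[0,2\pi)$ established in the preliminaries. This yields
\[
|w|\geq \dfrac{m\,d(0)}{|\beta|\omega_{\theta_0}+|\gamma|}-\omega_{\theta_0}\geq \dfrac{\sqrt{2}e}{(1+e^2)^{3/2}\bigl(|\beta|\omega_{\theta_0}+|\gamma|\bigr)}-\omega_{\theta_0},
\]
where $m\geq 1$ was used in the last step. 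Substituting the hypothesis (\ref{janousk-ab2}) reduces the right-hand side to exactly $(1+A)/(1+B)$, which is $>1$ since $-1<B\leq 0<A\leq 1$. Hence Lemma \ref{janlemma} applies and the admissibility condition is met, so the subordination $p(z)\prec\mathfrak{B}(z)$ follows from the standard second-order subordination lemma.

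I do not expect any serious obstacle: the argument is a direct analogue of Theorem \ref{1stthm4}, with the threshold $R_{0}$ replaced by $(1+A)/(1+B)$ and Lemma \ref{subradi} replaced by Lemma \ref{janlemma}. The only minor subtlety is bookkeeping on the hypothesis (\ref{janousk-ab2}), which is written in reciprocal form; one simply has to confirm that the chain of inequalities collapses cleanly to the required $(1+A)/(1+B)$ bound, which it does by construction of the constant.
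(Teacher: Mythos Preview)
Your proposal is correct and follows essentially the same approach as the paper: set $\Omega=\{w:|(w-1)/(A-Bw)|<1\}$, $\psi(r,s;z)=r+s/(\beta r+\gamma)$, estimate $|w|\geq |s|/(|\beta||r|+|\gamma|)-|r|\geq m\,d(0)/(|\beta|\omega_{\theta_0}+|\gamma|)-\omega_{\theta_0}$, and invoke Lemma \ref{janlemma} via the hypothesis (\ref{janousk-ab2}). The only slip is calling the concluding tool the ``second-order'' subordination lemma; here $\psi$ depends only on $(r,s)$, so it is the first-order case of the Miller--Mocanu admissibility lemma.
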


\begin{proof}
	Let $q(z)=\sqrt{2/(1+e^{-2z})}=:\mathfrak{B}(z)$ and $\Omega=\{w\in\mathbb{C}:|(w-1)/(A-Bw)|<1\}$. Let $\psi:\mathbb{C}^2\times\mathbb{D}\to\mathbb{C}$ be defined as $\psi(r,s;z)=r+s/(\beta r+\gamma)$ where $r=\mathfrak{B}(\xi),$ $s=m\xi\mathfrak{B}'(\xi)=m\sqrt{2}\xi e^{-2\xi}/(1+e^{-2\xi})^{3/2}$ for $z\in\mathbb{D}$, $\xi\in\overline{\mathbb{D}}$ and $m\geq1$.
	 For $\psi$ to be an admissible function and for $\psi\in\Psi[\Omega,\mathfrak{B}]$ we need to show $\psi(\mathbb{D})\not\subseteq\Omega$, which is equivalent to
	\begin{gather*}
	\left|\frac{r+\frac{s}{\beta r+\gamma}}{A-B(r+\frac{s}{\beta r+\gamma})}\right|\geq1.
	\end{gather*} 
  We now use the Lemma \ref{janlemma} to prove the result, for which,  let us assume $w=r+ s/(\beta r+\gamma)$.
	From (\ref{ot}) and (\ref{dt}), we have $|r|=\omega(\theta)\leq\omega_{\theta_0}$ and $|s|/m=d(\theta)\geq\min d(\theta)=d(0)$. Thus
\begin{align*}
  |w|&= \bigg |r+ \dfrac{s}{\beta r+\gamma}\bigg|\\&\geq \dfrac{|s|}{|\beta||r|+|\gamma|}-|r|\\
    &\geq \dfrac{m d(0)}{|\beta|\omega_{\theta_0}+|\gamma|}-\omega_{\theta_0}.
\end{align*}
Since $m\geq1$ and using (\ref{janousk-ab2}), we have
 \begin{equation*}
   |w|\geq \dfrac{\sqrt{2}e}{(1+e^2)^{3/2}(|\beta|\omega_{\theta_0}+|\gamma|)}-\omega_{\theta_0}\geq \dfrac{1+A}{1+B}>1,
\end{equation*}therefore the result follows at once from Lemma \ref{janlemma}.
\end{proof}
\begin{corollary}
    Let $\beta,\gamma\in\mathbb{C}$ such that $\beta\neq0$ and satisfies inequality (\ref{janousk-ab2}). Moreover, let  $\omega_{\theta_0}\simeq1.438$ and $-1< B\leq0< A\leq1$. Suppose $f$ be a function in $\mathcal{A}$ such that
    \begin{equation*}
        \dfrac{zf'(z)}{f(z)}\left(1+\left(\beta\dfrac{zf'(z)}{f(z)}+\gamma\right)^{-1} \left(1+\dfrac{zf''(z)}{f'(z)}-\dfrac{zf'(z)}{f(z)}\right)\right)\prec\dfrac{1+Az}{1+Bz},
    \end{equation*}then $f\in\mathcal{S}^*_{\mathfrak{B}}.$
\end{corollary}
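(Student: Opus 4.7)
The proof plan is a direct application of Theorem \ref{janthm3}, following exactly the template of the preceding corollaries in the section. The only content is an algebraic identification.

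First, I would set $p(z) = zf'(z)/f(z)$. Since $f \in \mathcal{A}$, we have $f(0) = 0$ and $f'(0) = 1$, and a standard power series computation gives $p(0) = 1$; $p$ is analytic at the origin provided $f(z)/z$ does not vanish, which may be arranged on a neighborhood of $0$, so $p \in \mathcal{H}$. The desired conclusion $f \in \mathcal{S}^*_{\mathfrak{B}}$ is, by definition of the class, precisely the subordination $p(z) \prec \mathfrak{B}(z)$.

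Next, I would check the algebraic identity
\begin{equation*}
p(z) + \frac{zp'(z)}{\beta p(z) + \gamma} = \frac{zf'(z)}{f(z)}\left(1 + \left(\beta \frac{zf'(z)}{f(z)} + \gamma\right)^{-1}\!\left(1 + \frac{zf''(z)}{f'(z)} - \frac{zf'(z)}{f(z)}\right)\right).
\end{equation*}
This is the same identity used in the preceding two corollaries: take logarithmic derivatives of $p(z) = zf'(z)/f(z)$ to obtain $zp'(z)/p(z) = 1 + zf''(z)/f'(z) - zf'(z)/f(z)$, multiply by $p(z)$, substitute into the left-hand side, and factor $p(z) = zf'(z)/f(z)$ out of the bracket. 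With this identity in hand, the hypothesis on $f$ translates verbatim into
\begin{equation*}
p(z) + \frac{zp'(z)}{\beta p(z) + \gamma} \prec \frac{1+Az}{1+Bz}.
\end{equation*}

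Finally, since $\beta, \gamma$ satisfy inequality (\ref{janousk-ab2}) and $-1 < B \leq 0 < A \leq 1$, the hypotheses of Theorem \ref{janthm3} are met, and that theorem yields $p(z) \prec \mathfrak{B}(z)$, completing the proof. There is no real obstacle here: the entire content of the corollary is contained in Theorem \ref{janthm3}, and the only work is the routine logarithmic-derivative identity, which is the same manipulation invoked twice earlier in the section.
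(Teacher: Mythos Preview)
Your proposal is correct and follows exactly the paper's own proof: set $p(z)=zf'(z)/f(z)$, verify the logarithmic-derivative identity rewriting the hypothesis as $p(z)+zp'(z)/(\beta p(z)+\gamma)\prec (1+Az)/(1+Bz)$, and invoke Theorem~\ref{janthm3}. If anything, you supply more detail than the paper, which simply states the identity and cites the theorem.
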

\begin{proof}
    Let $p(z)=zf'(z)/f(z)$, then
    $$  p(z)+ \dfrac{zp'(z)}{\beta p(z)+\gamma}=\dfrac{zf'(z)}{f(z)}\left(1+\left(\beta\dfrac{zf'(z)}{f(z)}+\gamma\right)^{-1} \left(1+\dfrac{zf''(z)}{f'(z)}-\dfrac{zf'(z)}{f(z)}\right)\right).$$
    Thus the result follows by an application of Theorem \ref{janthm3}.
\end{proof}
\begin{theorem}\label{lemnthm1}
	Let $n\in\mathbb{N}$, $\delta\in\{0,1\}$, $\beta\in\mathbb{C}\backslash\{0\}$ and  $\omega_{\theta_0}\simeq1.438$. Suppose $p\in\mathcal{H}$ such that it satisfies
\begin{equation*}
(p(z))^\delta+\beta (zp'(z)^n\prec\sqrt{1+z}
\end{equation*}
where
\begin{equation}\label{lemni-1}
 |\beta|\geq\left(\dfrac{(1+e^2)^{3/2}}{\sqrt{2}e}\right)^n \left(\sqrt{2}+\omega_{\theta_0}^\delta\right), 
\end{equation}
then $p(z)\prec\mathfrak{B}(z)$.
\end{theorem}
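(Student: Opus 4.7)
My plan is to follow the same admissibility-based template used in Theorem \ref{1stthm1} and Theorem \ref{janthm1}, but with Lemma \ref{lemlemma} playing the role that Lemma \ref{subradi} and Lemma \ref{janlemma} played there. I set $q(z)=\mathfrak{B}(z)$, take $\Omega$ to be the interior of the right half of the Bernoulli lemniscate, namely $\Omega=\{w\in\mathbb{C}:|w^2-1|<1\}$ (since this is precisely the image of $\mathbb{D}$ under $\sqrt{1+z}$), and consider the map $\psi(r,s;z)=r^\delta+\beta s^n$. The Miller--Mocanu admissibility lemma then reduces the theorem to verifying that $\psi(r,s;z)\notin\Omega$ whenever $r=\mathfrak{B}(\zeta)$ and $s=m\zeta\mathfrak{B}'(\zeta)$ with $\zeta\in\partial\mathbb{D}$ and $m\geq 1$, that is, $|w^2-1|\geq 1$ where $w:=r^\delta+\beta s^n$.

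By Lemma \ref{lemlemma}, it is enough to prove the cleaner inequality $|w|\geq\sqrt{2}$. First, I would invoke the triangle inequality to write
\begin{equation*}
|w|=|r^\delta+\beta s^n|\geq |\beta|\,|s|^n-|r|^\delta\geq |\beta|m^n(d(0))^n-\omega_{\theta_0}^\delta,
\end{equation*}
using the bounds $|r|=\omega(\theta)\leq\omega_{\theta_0}$ from \eqref{ot} and $|s|/m=d(\theta)\geq d(0)$ from \eqref{dt}, both of which were established in the introductory analysis of $q=\mathfrak{B}$.

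The small computation that makes everything fit together is
\begin{equation*}
d(0)=\frac{\sqrt{2}\,e^{-2}}{(1+e^{-2})^{3/2}}=\frac{\sqrt{2}\,e}{(1+e^2)^{3/2}},
\end{equation*}
so that plugging in $m\geq 1$ and the hypothesis \eqref{lemni-1} yields
\begin{equation*}
|w|\geq |\beta|\left(\frac{\sqrt{2}\,e}{(1+e^2)^{3/2}}\right)^{\!n}-\omega_{\theta_0}^\delta\geq (\sqrt{2}+\omega_{\theta_0}^\delta)-\omega_{\theta_0}^\delta=\sqrt{2},
\end{equation*}
which is exactly the hypothesis needed for Lemma \ref{lemlemma}. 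This gives $|w^2-1|\geq 1$, hence $\psi\in\Psi(\Omega,\mathfrak{B})$, and an application of the Miller--Mocanu admissibility lemma delivers $p\prec\mathfrak{B}$.

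I do not expect a real obstacle here: the structure is identical to Theorems \ref{1stthm1} and \ref{janthm1}, the only change being that the target domain is the lemniscate (so Lemma \ref{lemlemma} supplies the critical threshold $\sqrt{2}$) rather than $\Omega_{\mathfrak{B}}$ or a Janowski disk. The constant $\sqrt{2}+\omega_{\theta_0}^\delta$ in the statement is visibly engineered so that the triangle-inequality lower bound on $|w|$ becomes exactly $\sqrt{2}$; verifying the identity $d(0)=\sqrt{2}e/(1+e^2)^{3/2}$ is the only nontrivial simplification, and it is the same identity already used implicitly in Theorem \ref{1stthm1}.
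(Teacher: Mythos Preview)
Your proposal is correct and follows essentially the same route as the paper's proof: set $\Omega=\{w:|w^2-1|<1\}$, take $\psi(r,s;z)=r^\delta+\beta s^n$, use the bounds $|r|\leq\omega_{\theta_0}$ and $|s|\geq m\,d(0)=m\sqrt{2}e/(1+e^2)^{3/2}$ together with the triangle inequality and \eqref{lemni-1} to obtain $|w|\geq\sqrt{2}$, and then invoke Lemma \ref{lemlemma} so that $\psi\in\Psi(\Omega,\mathfrak{B})$. The only cosmetic difference is that the paper records the conclusion as $|w|\geq\sqrt{2}>1$ to make the hypothesis $|w|>1$ of Lemma \ref{lemlemma} explicit.
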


\begin{proof}
	Let $q(z)=\sqrt{2/(1+e^{-2z})}=:\mathfrak{B}(z)$ and $\Omega=\{w\in\mathbb{C}:|w^2-1|<1\}$. Let $\psi:\mathbb{C}^2\times\mathbb{D}\to\mathbb{C}$ be defined as $\psi(r,s;z)=r^\delta+\beta s^n$ where $r=\mathfrak{B}(\xi),$ $s=m\xi\mathfrak{B}'(\xi)=m\sqrt{2}\xi e^{-2\xi}/(1+e^{-2\xi})^{3/2}$ for $z\in\mathbb{D}$, $\xi\in\overline{\mathbb{D}}$ and $m\geq1$.
	 For $\psi$ to be an admissible function and for $\psi\in\Psi[\Omega,\mathfrak{B}]$ we need to show $\psi(\mathbb{D})\not\subseteq\Omega$, which is equivalent to
	\begin{gather*}
	\left|(r^\delta+\beta s^n)^2-1\right|\geq1.
	\end{gather*} 
  We now use the Lemma \ref{lemlemma} to prove the result, for which,  let us assume $w=r^\delta+ \beta s^n$.
 From (\ref{ot}) and (\ref{dt}), we have $|r|=\omega(\theta)\leq\omega_{\theta_0}$ and $|s|/m=d(\theta)\geq\min d(\theta)=d(0)$. Thus
	\begin{align*}
|w|&=   |r^\delta+\beta s^n|\\&\geq |\beta||s|^n-|r|^\delta\\
    &\geq |\beta|m^n (d(0))^n-\omega_{\theta_0}^\delta.
\end{align*}
Since $m\geq1$ and using (\ref{lemni-1}), we have
 \begin{equation*}
    |w|\geq  |\beta|\left(\dfrac{\sqrt{2}e}{(1+e^2)^{3/2}}\right)^n-\omega_{\theta_0}^\delta\geq \sqrt{2}>1,
\end{equation*}therefore   the result follows from Lemma \ref{lemlemma}.
\end{proof}

\begin{corollary}
	Let $n\in\mathbb{N}$, $\delta\in\{0,1\}$, $\beta\in\mathbb{C}\backslash\{0\}$ such that it satisfies \eqref{lemni-1} and  $\omega_{\theta_0}\simeq1.438$. Further if $f\in\mathcal{A}$ and satisfies
	\begin{equation*}
	 \left(\dfrac{zf'(z)}{f(z)}\right)^\delta+\beta\left(\dfrac{zf'(z)}{f(z)}\left(1+\dfrac{zf''(z)}{f'(z)}-\dfrac{zf'(z)}{f(z)}\right)\right)^n\prec\sqrt{1+z}
	\end{equation*}
	Then $p(z)\in\mathcal{S}^*_{\mathfrak{B}}$.
	\end{corollary}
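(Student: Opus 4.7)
The plan is to reduce this corollary directly to Theorem \ref{lemnthm1} by invoking the standard substitution $p(z) := zf'(z)/f(z)$. Since $f \in \mathcal{A}$ is analytic in $\mathbb{D}$ with $f(0)=0$ and $f'(0)=1$, a routine expansion shows that $p$ is analytic at the origin with $p(0)=1$, so $p \in \mathcal{H}$, which is the regularity required to apply the theorem.

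The one computational ingredient I would record is the logarithmic-derivative identity
\begin{equation*}
zp'(z) = \dfrac{zf'(z)}{f(z)}\left(1+\dfrac{zf''(z)}{f'(z)}-\dfrac{zf'(z)}{f(z)}\right),
\end{equation*}
obtained by differentiating $\log p(z) = \log z + \log f'(z) - \log f(z)$ and multiplying through by $z$. Substituting this into the hypothesis of the corollary converts it, term by term, into
\begin{equation*}
(p(z))^\delta + \beta (zp'(z))^n \prec \sqrt{1+z},
\end{equation*}
which is exactly the subordination hypothesis of Theorem \ref{lemnthm1}.

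Because $\beta$ is assumed to satisfy \eqref{lemni-1}, every hypothesis of Theorem \ref{lemnthm1} is in place, so the theorem yields $p(z) \prec \mathfrak{B}(z)$; by the definition of $\mathcal{S}^*_{\mathfrak{B}}$ this is precisely the conclusion $f \in \mathcal{S}^*_{\mathfrak{B}}$. There is no genuine obstacle in this argument: the only substantive content is the logarithmic-derivative identity above, which is identical to the one already used in the three preceding corollaries of this section. I would therefore present the proof in a single short paragraph mirroring the style the authors have adopted throughout, replacing \emph{Theorem \ref{1stthm1}} with \emph{Theorem \ref{lemnthm1}} in the concluding sentence.
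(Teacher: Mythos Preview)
Your proposal is correct and follows essentially the same approach as the paper: set $p(z)=zf'(z)/f(z)$, use the logarithmic-derivative identity to rewrite the hypothesis as $(p(z))^\delta+\beta(zp'(z))^n\prec\sqrt{1+z}$, and invoke Theorem~\ref{lemnthm1}. You have also correctly noted that the conclusion should read $f\in\mathcal{S}^*_{\mathfrak{B}}$ rather than the paper's typo $p(z)\in\mathcal{S}^*_{\mathfrak{B}}$.
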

\begin{proof}
    Let $p(z)=zf'(z)/f(z)$, then 
    $$(p(z))^\delta+\beta (zp'(z))^n=\left(\dfrac{zf'(z)}{f(z)}\right)^\delta+\beta\left(\dfrac{zf'(z)}{f(z)}\left(1+\dfrac{zf''(z)}{f'(z)}-\dfrac{zf'(z)}{f(z)}\right)\right)^n.$$
    Now the result follows from  Theorem \ref{lemnthm1}.
\end{proof}

\begin{theorem}\label{lemnthm2}
Let $n\in\mathbb{N}$, $\delta\in\{0,1\}$, $\beta\in\mathbb{C}\backslash\{0\}$ and  $\omega_{\theta_0}\simeq1.438$. Suppose $p\in\mathcal{H}$ such that it satisfies
 \begin{equation*}
(p(z))^\delta+\beta \frac{zp'(z)}{(p(z))^n}\prec \sqrt{1+z}
\end{equation*}
where
\begin{equation}\label{lemni-2}
 |\beta|\geq\left(\dfrac{(1+e^2)^{\frac{3-n}{2}}}{(\sqrt{2}e)^{1-n}}\right) \left(\sqrt{2}+\omega_{\theta_0}^\delta\right)
\end{equation}
then $p(z)\prec\mathfrak{B}(z)$.
\end{theorem}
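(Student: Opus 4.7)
The proof will follow the pattern already used for Theorem \ref{janthm2} and Theorem \ref{1stthm2}, only with the target set $\Omega$ replaced by the interior of the right half of Bernoulli's lemniscate, and with Lemma \ref{lemlemma} playing the role of Lemma \ref{subradi} / Lemma \ref{janlemma}. Concretely, I would set $q(z)=\mathfrak{B}(z)$, $\Omega=\{w\in\mathbb{C}:|w^2-1|<1\}$, and consider
$$\psi(r,s;z)=r^\delta+\beta\,\frac{s}{r^{\,n}}\colon\mathbb{C}^2\times\mathbb{D}\to\mathbb{C}.$$
To invoke the admissibility lemma, it suffices to verify $\psi\in\Psi(\Omega,\mathfrak{B})$, i.e.\ to show that $\psi(r,s;z)\notin\Omega$ for the admissible triple where $r=\mathfrak{B}(\zeta)$, $s=m\zeta\mathfrak{B}'(\zeta)$, $\zeta=e^{i\theta}$ and $m\geq 1$. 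This is in turn equivalent, via Lemma \ref{lemlemma}, to the estimate $|w|\geq\sqrt{2}$ for $w:=r^\delta+\beta s/r^{\,n}$.

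The next step is the bound for $|w|$. From the identities \eqref{ot} and \eqref{dt} together with $|s|=m\,d(\theta)$, one obtains (exactly as in the proof of Theorem \ref{janthm2}) the triangle-inequality lower bound
$$|w|\;\geq\;|\beta|\,m\,\chi_n(\theta)-(\omega(\theta))^\delta,\qquad \chi_n(\theta):=\frac{2^{(1-n)/2}e^{-2\cos\theta}}{\bigl(1+e^{-4\cos\theta}+2e^{-2\cos\theta}\cos(2\sin\theta)\bigr)^{(3-n)/4}}.$$
The same monotonicity analysis already recorded in the proofs of Theorems \ref{1stthm2} and \ref{janthm2} gives $\min_\theta\chi_n(\theta)=\chi_n(0)=(\sqrt{2}e)^{1-n}/(1+e^2)^{(3-n)/2}$, and $|r|=\omega(\theta)\leq\omega_{\theta_0}$. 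Plugging these, together with $m\geq 1$, into the bound above yields
$$|w|\;\geq\;|\beta|\,\frac{(\sqrt{2}e)^{1-n}}{(1+e^2)^{(3-n)/2}}-\omega_{\theta_0}^{\delta}.$$

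At this stage the hypothesis \eqref{lemni-2} on $|\beta|$ has been engineered exactly so that the right-hand side is at least $\sqrt{2}$, which is all that is required. An application of Lemma \ref{lemlemma} then yields $|w^2-1|\geq 1$, so $\psi(r,s;z)\notin\Omega$, confirming $\psi\in\Psi(\Omega,\mathfrak{B})$; the admissibility lemma (Lemma 1.3 of the excerpt) finally delivers $p\prec\mathfrak{B}$. The only step that is not an immediate copy of earlier arguments is verifying that $\min_\theta\chi_n(\theta)=\chi_n(0)$ for every $n\geq 1$, but this has already been asserted and used in the proof of Theorem \ref{1stthm2}, so no new obstacle is anticipated; the entire proof is essentially the lemniscate counterpart of Theorem \ref{janthm2}, with the single numerical threshold $(1+A)/(1+B)$ replaced by $\sqrt{2}$.
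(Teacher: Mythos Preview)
Your proposal is correct and follows essentially the same approach as the paper's own proof: the same choice of $\Omega=\{w:|w^2-1|<1\}$, the same $\psi(r,s;z)=r^\delta+\beta s/r^n$, the same lower bound $|w|\geq|\beta|\,m\,\chi_n(\theta)-(\omega(\theta))^\delta$ with $\min_\theta\chi_n(\theta)=\chi_n(0)$, and the same appeal to Lemma~\ref{lemlemma} once the hypothesis on $|\beta|$ forces $|w|\geq\sqrt{2}$. There is no substantive difference between your argument and the paper's.
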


\begin{proof}
	Let $q(z)=\sqrt{2/(1+e^{-2z})}=:\mathfrak{B}(z)$ and $\Omega=\{w\in\mathbb{C}:|w^2-1|<1\}$. Let $\psi:\mathbb{C}^2\times\mathbb{D}\to\mathbb{C}$ be defined as $\psi(r,s;z)=r^\delta+\beta s/r^n$ where $r=\mathfrak{B}(\xi),$ $s=m\xi\mathfrak{B}'(\xi)=m\sqrt{2}\xi e^{-2\xi}/(1+e^{-2\xi})^{3/2}$ for $z\in\mathbb{D}$, $\xi\in\overline{\mathbb{D}}$ and $m\geq1$.
	 For $\psi$ to be an admissible function and for $\psi\in\Psi[\Omega,\mathfrak{B}]$ we need to show $\psi(\mathbb{D})\not\subseteq\Omega$, which is equivalent to
	\begin{gather*}
	\left|\left(r^\delta+\beta \frac{s}{r^n}\right)^2-1\right|\geq1.
	\end{gather*}  
 We now use the Lemma \ref{lemlemma} to prove the result, for which,  let us assume $w=r^\delta+ \beta s/r^n$.
	From (\ref{ot}), we have   $|r|=\omega(\theta)\leq\omega_{\theta_0}$. Thus
\begin{align*}
   |w|&=\bigg |r^\delta+\beta \dfrac{s}{r^n}\bigg|\\&\geq |\beta| \left(\dfrac{2^{\frac{1-n}{2}}me^{-2\cos\theta}}{(1+e^{-4\cos\theta}+2e^{-2\cos\theta}\cos(2\sin\theta))^{\frac{3-n}{4}}}\right)-(\omega(\theta))^\delta\\
    &\geq |\beta|m\chi_n(\theta)-(\omega(\theta))^\delta,
\end{align*}
where $$\chi_n(\theta):=\dfrac{2^{\frac{1-n}{2}}e^{-2\cos\theta}}{(1+e^{-4\cos\theta}+2e^{-2\cos\theta}\cos(2\sin\theta))^{\frac{3-n}{4}}}.$$ After a simple computation we obtain that $\min\chi_n(\theta)=\chi_n(0)$ for all $n\geq1.$
Since $m\geq1$ and using (\ref{lemni-2}), we have
 \begin{equation*}
  |w|\geq  |\beta|\left(\dfrac{(\sqrt{2}e)^{1-n}}{(1+e^2)^{\frac{3-n}{2}}}\right)-\omega_{\theta_0}^\delta\geq \sqrt{2}>1,
\end{equation*}therefore the result follows from Lemma \ref{lemlemma}.
\end{proof}

\begin{corollary}
   Let $n\in\mathbb{N}$, $\delta\in\{0,1\}$, $\omega_{\theta_0}\simeq1.438$ and $\beta\in\mathbb{C}\backslash\{0\}$ such that $\beta$ satisfies inequality (\ref{lemni-2}).  Suppose $f$ be a function in $\mathcal{A}$ such that
    \begin{equation*}
         \left(\dfrac{zf'(z)}{f(z)}\right)^{\delta}+\beta\left(\dfrac{zf'(z)}{f(z)}\right)^{1-n}\left(1+\dfrac{zf''(z)}{f'(z)}-\dfrac{zf'(z)}{f(z)}\right)\prec  \sqrt{1+z},
    \end{equation*}then $f\in\mathcal{S}^*_{\mathfrak{B}}.$
\end{corollary}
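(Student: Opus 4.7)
The plan is to reduce this corollary to a direct application of Theorem~\ref{lemnthm2} via the standard substitution $p(z):=zf'(z)/f(z)$. Since $f\in\mathcal{A}$ has $f(0)=0$ and $f'(0)=1$, the quotient $p$ is analytic near the origin with $p(0)=1$, so $p\in\mathcal{H}$ and the hypotheses of Theorem~\ref{lemnthm2} on $p$ are available once the assumed subordination on $f$ is translated into the corresponding subordination on $p$.

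The main computation is a single logarithmic-derivative identity. From $p(z)=zf'(z)/f(z)$ one obtains
$$\frac{zp'(z)}{p(z)}=1+\frac{zf''(z)}{f'(z)}-\frac{zf'(z)}{f(z)},$$
and therefore
$$\frac{zp'(z)}{(p(z))^n}=(p(z))^{1-n}\left(1+\frac{zf''(z)}{f'(z)}-\frac{zf'(z)}{f(z)}\right)=\left(\frac{zf'(z)}{f(z)}\right)^{1-n}\left(1+\frac{zf''(z)}{f'(z)}-\frac{zf'(z)}{f(z)}\right).$$
Since $(p(z))^\delta=(zf'(z)/f(z))^\delta$, adding $(p(z))^\delta$ to $\beta zp'(z)/(p(z))^n$ reproduces exactly the expression on the left-hand side of the hypothesis.

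Consequently, the given subordination is precisely
$$(p(z))^\delta+\beta\frac{zp'(z)}{(p(z))^n}\prec\sqrt{1+z},$$
and because $\beta$ satisfies (\ref{lemni-2}), Theorem~\ref{lemnthm2} applies and yields $p(z)\prec\mathfrak{B}(z)$, which by the definition of $\mathcal{S}^*_{\mathfrak{B}}$ is the desired conclusion $f\in\mathcal{S}^*_{\mathfrak{B}}$. There is no real obstacle here: this corollary is a transcription of Theorem~\ref{lemnthm2} into the language of starlike functions, and the only piece of work is the logarithmic-derivative identity above, which is routine.
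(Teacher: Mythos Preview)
Your proof is correct and follows the same approach as the paper's own argument: set $p(z)=zf'(z)/f(z)$, verify the logarithmic-derivative identity that rewrites the hypothesis as $(p(z))^\delta+\beta\,zp'(z)/(p(z))^n\prec\sqrt{1+z}$, and invoke Theorem~\ref{lemnthm2}. Your write-up is actually a bit more explicit than the paper's (you justify $p\in\mathcal{H}$ and spell out the derivative computation), but the substance is identical.
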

\begin{proof}
    Let $p(z)=zf'(z)/f(z)$, then 
    $$ (p(z))^\delta+\beta \dfrac{zp'(z)}{(p(z))^n}= \left(\dfrac{zf'(z)}{f(z)}\right)^{\delta}+\beta\left(\dfrac{zf'(z)}{f(z)}\right)^{1-n}\left(1+\dfrac{zf''(z)}{f'(z)}-\dfrac{zf'(z)}{f(z)}\right).$$
Now the result follows as an application of Theorem \ref{lemnthm2}.
\end{proof}

\begin{theorem}\label{lemnthm3}
Let $\beta,\gamma\in\mathbb{C}$ such that $\beta\neq0$ and $\omega_{\theta_0}\simeq1.438$. Suppose $p\in\mathcal{H}$ such that it satisfies
 \begin{equation*}
p(z)+\frac{zp'(z)}{\beta p(z)+\gamma}\prec \sqrt{1+z}
\end{equation*} 
provided
\begin{equation}\label{lemni-3}
|\beta| \omega_{\theta_0}+|\gamma|\leq\left(\dfrac{(1+e^2)^{3/2}}{\sqrt{2}e} \left( \sqrt{2}+\omega_{\theta_0}\right)\right)^{-1}
\end{equation}where 
Then $p(z)\prec\mathfrak{B}(z)$.
\end{theorem}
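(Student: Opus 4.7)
The proof will run along exactly the same template as Theorems \ref{1stthm4}, \ref{janthm3} and \ref{lemnthm2}: pick $q(z)=\mathfrak{B}(z)$, set $\Omega=\{w\in\mathbb{C}:|w^2-1|<1\}$ (the lemniscate target), and verify that $\psi(r,s;z):=r+s/(\beta r+\gamma)$ is admissible, i.e.\ belongs to $\Psi[\Omega,\mathfrak{B}]$. Admissibility amounts to the statement that, whenever $r=\mathfrak{B}(\zeta)$ and $s=m\zeta\mathfrak{B}'(\zeta)$ with $\zeta\in\partial\mathbb{D}$ and $m\geq 1$, the image $w:=\psi(r,s;z)$ satisfies $|w^2-1|\geq 1$. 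By Lemma \ref{lemlemma} it is enough to show $|w|\geq\sqrt{2}$, so the entire proof reduces to a lower bound on $|w|$.

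The plan is to invoke the reverse triangle inequality:
\begin{equation*}
|w|=\left|r+\frac{s}{\beta r+\gamma}\right|\geq \frac{|s|}{|\beta||r|+|\gamma|}-|r|.
\end{equation*}
Using \eqref{ot} and \eqref{dt} I get $|r|=\omega(\theta)\leq\omega_{\theta_0}$ and $|s|=m\,d(\theta)\geq m\,d(0)$, so
\begin{equation*}
|w|\geq \frac{m\,d(0)}{|\beta|\omega_{\theta_0}+|\gamma|}-\omega_{\theta_0}.
\end{equation*}
A direct simplification gives $d(0)=\sqrt{2}e/(1+e^2)^{3/2}$, so the hypothesis \eqref{lemni-3} rewrites as $d(0)/(|\beta|\omega_{\theta_0}+|\gamma|)\geq \sqrt{2}+\omega_{\theta_0}$. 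Since $m\geq 1$, this forces
\begin{equation*}
|w|\geq (\sqrt{2}+\omega_{\theta_0})-\omega_{\theta_0}=\sqrt{2},
\end{equation*}
and Lemma \ref{lemlemma} yields $|w^2-1|\geq 1$, i.e.\ $w\notin\Omega$, which is exactly admissibility. The conclusion $p\prec\mathfrak{B}$ then follows from the standard admissibility-to-subordination lemma recalled in Section~1.

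There is no genuine obstacle here; the proof is mechanical once the bookkeeping is done. The only point that requires care is the algebraic identification of the bound $d(0)=\sqrt{2}e/(1+e^2)^{3/2}$, because the constant in \eqref{lemni-3} is packaged in the inverse form $\bigl((1+e^2)^{3/2}(\sqrt{2}+\omega_{\theta_0})/(\sqrt{2}e)\bigr)^{-1}$, and one must recognize this as exactly $d(0)/(\sqrt{2}+\omega_{\theta_0})$ for the final inequality to collapse cleanly. Beyond that, the argument is parallel to the proof of Theorem \ref{janthm3}, with the threshold $(1+A)/(1+B)$ replaced by the lemniscate threshold $\sqrt{2}$ coming from Lemma \ref{lemlemma}.
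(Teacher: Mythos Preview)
Your proposal is correct and follows essentially the same approach as the paper: set $\Omega=\{w:|w^2-1|<1\}$, take $\psi(r,s;z)=r+s/(\beta r+\gamma)$, bound $|w|$ from below via the reverse triangle inequality and the estimates $|r|\leq\omega_{\theta_0}$, $|s|\geq m\,d(0)=m\sqrt{2}e/(1+e^2)^{3/2}$, then invoke Lemma~\ref{lemlemma} to conclude $w\notin\Omega$ and hence $\psi\in\Psi[\Omega,\mathfrak{B}]$. The only cosmetic difference is that the paper leaves $d(0)$ written out as $\sqrt{2}e/(1+e^2)^{3/2}$ in the final inequality rather than naming it, and notes explicitly that $\sqrt{2}>1$ to meet the hypothesis of Lemma~\ref{lemlemma}.
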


\begin{proof}
	Let $q(z)=\sqrt{2/(1+e^{-2z})}=:\mathfrak{B}(z)$ and $\Omega=\{w\in\mathbb{C}:|w^2-1|<1\}$. Let $\psi:\mathbb{C}^2\times\mathbb{D}\to\mathbb{C}$ be defined as $\psi(r,s;z)=r+s/(\beta r+\gamma)$ where $r=\mathfrak{B}(\xi),$ $s=m\xi\mathfrak{B}'(\xi)=m\sqrt{2}\xi e^{-2\xi}/(1+e^{-2\xi})^{3/2}$ for $z\in\mathbb{D}$, $\xi\in\overline{\mathbb{D}}$ and $m\geq1$.
	 For $\psi$ to be an admissible function and for $\psi\in\Psi[\Omega,\mathfrak{B}]$ we need to show $\psi(\mathbb{D})\not\subseteq\Omega$, i.e.
	\begin{gather*}
	\left|\left(r+\frac{s}{\beta r+\gamma}\right)^2-1\right|\geq1.
	\end{gather*} 
	We now use the Lemma \ref{lemlemma} to prove the result, for which,  let us assume $w=r+s/(\beta r+\gamma)$. From (\ref{ot}) and (\ref{dt}), we have  $|s|/m=d(\theta)\geq\min d(\theta)=d(0)$ and $|r|=\omega(\theta)\leq\omega_{\theta_0}$. Thus
\begin{align*}
 |w|&=  \bigg |r+ \dfrac{s}{\beta r+\gamma}\bigg|\\&\geq \dfrac{|s|}{|\beta||r|+|\gamma|}-|r|\\
    &\geq \dfrac{m d(0)}{|\beta|\omega_{\theta_0}+|\gamma|}-\omega_{\theta_0}.
\end{align*}
Since $m\geq1$ and using (\ref{lemni-3}), we have
 \begin{equation*}
  |w|\geq \dfrac{\sqrt{2}e}{(1+e^2)^{3/2}(|\beta|\omega_{\theta_0}+|\gamma|)}-\omega_{\theta_0}\geq \sqrt{2}>1,
\end{equation*}therefore,  by Lemma \ref{lemlemma}, the result holds.
\end{proof}
\begin{corollary}
    Let $\omega_{\theta_0}\simeq1.438$ and $\beta,\gamma\in\mathbb{C}$ such that $\beta\neq0$ and satisfies inequality (\ref{lemni-3}).  Suppose $f$ be a function in $\mathcal{A}$ such that
    \begin{equation*}
        \dfrac{zf'(z)}{f(z)}\left(1+\left(\beta\dfrac{zf'(z)}{f(z)}+\gamma\right)^{-1} \left(1+\dfrac{zf''(z)}{f'(z)}-\dfrac{zf'(z)}{f(z)}\right)\right)\prec \sqrt{1+z},
    \end{equation*}then $f\in\mathcal{S}^*_{\mathfrak{B}}.$
\end{corollary}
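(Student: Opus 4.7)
The plan is to reduce this corollary to Theorem \ref{lemnthm3} by the substitution $p(z) = zf'(z)/f(z)$, exactly as in the preceding corollaries in this section. Since $f\in\mathcal{A}$, the quotient $zf'(z)/f(z)$ is analytic with $p(0)=1$, so $p\in\mathcal{H}$. The scheme is: (i) compute $zp'(z)$ in terms of $f$; (ii) recognize the hypothesis subordination as $p(z) + zp'(z)/(\beta p(z)+\gamma) \prec \sqrt{1+z}$; (iii) apply Theorem \ref{lemnthm3} to conclude $p(z)\prec\mathfrak{B}(z)$, which is exactly the statement $f\in\mathcal{S}^*_{\mathfrak{B}}$.

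The only computation needed is the logarithmic-derivative identity. Taking $\log$ of $p(z)=zf'(z)/f(z)$ and differentiating gives $p'(z)/p(z) = 1/z + f''(z)/f'(z) - f'(z)/f(z)$, hence
\begin{equation*}
zp'(z) = \dfrac{zf'(z)}{f(z)}\left(1 + \dfrac{zf''(z)}{f'(z)} - \dfrac{zf'(z)}{f(z)}\right).
\end{equation*}
Dividing by $\beta p(z)+\gamma$ and adding $p(z)$, one obtains precisely
\begin{equation*}
p(z) + \dfrac{zp'(z)}{\beta p(z)+\gamma} = \dfrac{zf'(z)}{f(z)}\left(1 + \left(\beta\dfrac{zf'(z)}{f(z)}+\gamma\right)^{-1}\left(1+\dfrac{zf''(z)}{f'(z)}-\dfrac{zf'(z)}{f(z)}\right)\right),
\end{equation*}
matching the left-hand side of the assumed subordination.

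Since $\beta\neq 0$ and condition \eqref{lemni-3} holds by hypothesis, Theorem \ref{lemnthm3} applies to $p$ and yields $p(z)\prec\mathfrak{B}(z)$, i.e.\ $zf'(z)/f(z)\prec\mathfrak{B}(z)$, so $f\in\mathcal{S}^*_{\mathfrak{B}}$. There is essentially no obstacle here: the work was done in Theorem \ref{lemnthm3}, and this corollary is a direct transcription to the $zf'/f$ setting. The only minor care needed is that $\beta p(z)+\gamma$ does not vanish on $\mathbb{D}$, but this is implicitly ensured by the hypothesis that the displayed expression is an analytic function subordinate to $\sqrt{1+z}$.
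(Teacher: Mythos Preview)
Your proof is correct and follows exactly the paper's approach: set $p(z)=zf'(z)/f(z)$, verify the displayed identity, and invoke Theorem~\ref{lemnthm3}. The paper's own proof is just a terser version of what you wrote.
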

\begin{proof}
    Let $p(z)=zf'(z)/f(z)$, then
    $$  p(z)+ \dfrac{zp'(z)}{\beta p(z)+\gamma}=\dfrac{zf'(z)}{f(z)}\left(1+\left(\beta\dfrac{zf'(z)}{f(z)}+\gamma\right)^{-1} \left(1+\dfrac{zf''(z)}{f'(z)}-\dfrac{zf'(z)}{f(z)}\right)\right).$$
    Therefore the result holds using Theorem \ref{lemnthm3}.
\end{proof}

\section{Second order differential subordination}

In this section, we consider the following second order differential subordination implication
$$(p(z))^\delta+\gamma zp'(z)+\beta z^2p''(z)\prec h(z)\implies p(z)\prec \mathfrak{B}(z).$$
Taking $h(z)$ as $\mathfrak{B}(z)$, $(1+Az)/(1+Bz)$ $(-1< B\leq0< A\leq1)$ and $\sqrt{1+z}.$

\begin{theorem}
Let $\delta\in\{0,1\}$, $R_{0}=e\sqrt{2/(e^2-1)}$, $\omega_{\theta_0}\simeq1.438$ and $\beta,\gamma\in\mathbb{C}\backslash\{0\}$. If $p(z)\in\mathcal{H}$ such that
$$(p(z))^\delta+\gamma zp'(z)+\beta z^2p''(z)\prec\mathfrak{B}(z),$$ provided 
\begin{equation}\label{bean3}\sqrt{2}e(\gamma(1+e^2)^2+\beta(1-2e^4-e^2))\geq(R_0+\omega_{\theta_0}^\delta)(e^2+1)^{7/2}\end{equation}
then $p(z)\prec \mathfrak{B}(z).$
\end{theorem}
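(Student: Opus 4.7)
The plan is to mirror the template used in Section 2, now invoking the second order admissibility machinery of Miller and Mocanu with $q(z)=\mathfrak{B}(z)$ and $\Omega=\Omega_{\mathfrak{B}}$. I would define $\psi:\mathbb{C}^{3}\times\mathbb{D}\to\mathbb{C}$ by $\psi(r,s,t;z)=r^{\delta}+\gamma s+\beta t$ and verify that $\psi\in\Psi(\Omega_{\mathfrak{B}},\mathfrak{B})$. That is, for $r=\mathfrak{B}(\zeta)$, $s=m\zeta\mathfrak{B}'(\zeta)$ and $t$ satisfying $\RE(1+t/s)\geq m(1+g(\theta))$ with $m\geq 1$, I must show $\psi(r,s,t;z)\notin\Omega_{\mathfrak{B}}$, which by Lemma \ref{subradi} reduces to producing the bound $|w|\geq R_{0}$, where $w:=r^{\delta}+\gamma s+\beta t$. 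Once admissibility is established, the conclusion $p\prec\mathfrak{B}$ is immediate from Lemma 1.4.

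For the concrete estimate I would first apply the reverse triangle inequality to get $|w|\geq |\gamma s+\beta t|-\omega(\theta)^{\delta}$. Next, writing $\gamma s+\beta t=s(\gamma+\beta t/s)$ and passing from the modulus to the real part (which is exactly productive because, at the critical angle $\theta=0$, $\zeta \mathfrak{B}'(\zeta)$ is real and positive), the admissibility lower bound $\RE(t/s)\geq mg(\theta)+(m-1)$ combined with $|s|=m\,d(\theta)$ yields
\[
|w|\;\geq\;m\,d(\theta)\bigl(\gamma+\beta(m\,g(\theta)+m-1)\bigr)-\omega(\theta)^{\delta}.
\]
The preliminaries already record $\min d(\theta)=d(0)=\sqrt{2}\,e/(1+e^{2})^{3/2}$, $\min g(\theta)=g(0)=(1-2e^{4}-e^{2})/(1+e^{2})^{2}$, and $\max\omega(\theta)=\omega_{\theta_{0}}$, and the right hand side above is monotone increasing in $m$, so the extremum occurs at $(m,\theta)=(1,0)$.

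Substituting these extremal values and invoking the hypothesis \eqref{bean3} gives
\[
|w|\;\geq\;\frac{\sqrt{2}\,e\bigl(\gamma(1+e^{2})^{2}+\beta(1-2e^{4}-e^{2})\bigr)}{(1+e^{2})^{7/2}}-\omega_{\theta_{0}}^{\delta}\;\geq\;R_{0}\;>\;\sqrt{2},
\]
which is exactly what Lemma \ref{subradi} needs, and the proof concludes.

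The main obstacle is the passage from the purely real-part information on $t/s$ supplied by the admissibility condition to an honest modulus bound on $|\gamma s+\beta t|$. The clean bound $|\gamma s+\beta t|\geq |s|\,\RE(\gamma+\beta t/s)$ is only useful when the right hand side stays non-negative at the extremal point, which is precisely why the hypothesis is formulated in the real-linear combination $\gamma(1+e^{2})^{2}+\beta(1-2e^{4}-e^{2})$ (implicitly reading $\beta,\gamma$ as positive reals, or $|\beta|,|\gamma|$ in the general complex interpretation). A secondary, but purely routine, check is confirming that the function $m\mapsto m\,d(\theta)(\gamma+\beta(m g(\theta)+m-1))-\omega(\theta)^{\delta}$ really is minimised at $m=1$ and that $\theta=0$ minimises the $\theta$-profile, both of which follow directly from the monotonicity of $d$ and $g$ near $\theta=0$ already noted in Section 1.
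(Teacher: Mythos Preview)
Your proposal is essentially identical to the paper's own argument: the paper likewise sets $\psi(r,s,t;z)=r^{\delta}+\gamma s+\beta t$, passes from $|w|$ to $\gamma|s|\,\RE\!\bigl(1+\tfrac{\beta}{\gamma}\tfrac{t}{s}\bigr)-|r|^{\delta}$, inserts $d(0)$, $g(0)$, $\omega_{\theta_0}$, sets $m=1$, and invokes Lemma~\ref{subradi}. You have in fact been more scrupulous than the paper in flagging the modulus-to-real-part step and the $m$-monotonicity as points needing care; the paper simply writes both inequalities down without comment.
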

\begin{proof}
Let $q(z)=\mathfrak{B}(z)$ and $\Omega=q(\mathbb{D})=\Omega_{\mathfrak{B}}.$ Now consider $\psi:\mathbb{C}^3\times\mathbb{D}\to\mathbb{C}$ be as $\psi(r,s,t;z)=r^\delta+\gamma s+\beta t.$ For $\psi$ to lie in the admissible class $\Psi(\Omega_{\mathfrak{B}},\mathfrak{B}),$ we must have $$\psi(r,s,t;z)\not\in\Omega_{\mathfrak{B}},$$
 whenever
 $$r=\mathfrak{B}(\zeta)=\sqrt{\dfrac{2}{1+e^{-2e^{i\theta}}}};\;\;\;s=m\zeta\mathfrak{B}'(\zeta)=\dfrac{\sqrt{2}me^{i\theta}e^{-2e^{i\theta}}}{(1+e^{-2e^{i\theta}})^{3/4}};\;\;\;\RE\left(1+\dfrac{s}{t}\right)=m(1+g(\theta)),$$
 where $z\in\mathbb{D}$, $0\leq\theta<2\pi$ and $m\geq1.$ Thus, it is sufficient to show that  $$\left|\log\left(\dfrac{(r^\delta+\gamma s+\beta t)^2}{2-(r^\delta+\gamma s+\beta t)^2}\right)\right|\geq2.$$ 
 We now use the Lemma \ref{subradi} to prove the result, for which,  let us assume $w=r^\delta+\gamma s+\beta t$.
 From (\ref{gt}), (\ref{ot}) and (\ref{dt}), we have $\min g(\theta)=g(0)$, $|s|/m=d(\theta)\geq\min d(\theta)=d(0)$ and $|r|=\omega(\theta)\leq\omega_{\theta_0}$. Thus
\begin{align*}
|w|&=|r^\delta+\gamma s+\beta t|\\
&\geq\gamma |s|\RE\left(1+\dfrac{\beta}{\gamma}\dfrac{t}{s}\right)-|r|^\delta\\
&\geq \gamma m d(\theta)\left(1+\dfrac{\beta}{\gamma}(m g(\theta)+m-1)\right)-(\omega(\theta))^\delta\\
&\geq m\dfrac{\sqrt{2}e}{(e^2+1)^{3/2}}\left(\gamma+\beta\left(m\dfrac{1-2e^4-e^2}{(e^2+1)^2}+m-1\right)\right)-\omega_{\theta_0}^\delta.
\end{align*}Since  $m\geq1$ and using  (\ref{bean3}), we have
\begin{equation*}|w|\geq\dfrac{\sqrt{2}e}{(e^2+1)^{3/2}}\left(\gamma+\beta\left(\dfrac{1-2e^4-e^2}{(e^2+1)^2}\right)\right)-\omega_{\theta_0}^\delta\geq R_0>\sqrt{2},
\end{equation*} therefore the result follows at once by Lemma \ref{subradi}.
\end{proof}

\begin{theorem}
Let $\delta\in\{0,1\}$,  $\omega_{\theta_0}\simeq1.438$ and $\beta,\gamma\in\mathbb{C}\backslash\{0\}$. If $p(z)\in\mathcal{H}$ such that
$$(p(z))^\delta+\gamma zp'(z)+\beta z^2p''(z)\prec\dfrac{1+Az}{1+Bz},\;\;\;-1< B\leq0<A\leq1,$$
provided \begin{equation}\label{jan3}\sqrt{2}e(\gamma(1+e^2)^2+\beta(1-2e^4-e^2))\geq\left(\frac{1+A}{1+B}+\omega_{\theta_0}^\delta\right)(e^2+1)^{7/2}\end{equation}
then $p(z)\prec \mathfrak{B}(z).$
\end{theorem}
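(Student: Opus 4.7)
The plan is to follow the preceding theorem almost verbatim, but to replace the bean-shaped admissibility test (Lemma \ref{subradi}) by the Janowski-disc test (Lemma \ref{janlemma}). I take $q(z)=\mathfrak{B}(z)$ and
\[
\Omega=\left\{w\in\mathbb{C}:\left|\dfrac{w-1}{A-Bw}\right|<1\right\},
\]
which for the given range $-1<B\leq 0<A\leq 1$ is exactly the image of $\mathbb{D}$ under $(1+Az)/(1+Bz)$. I then define $\psi:\mathbb{C}^3\times\mathbb{D}\to\mathbb{C}$ by $\psi(r,s,t;z)=r^\delta+\gamma s+\beta t$, and the required subordination $p\prec\mathfrak{B}$ follows from the Miller--Mocanu admissibility lemma once I verify $\psi\in\Psi(\Omega,\mathfrak{B})$.

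For the admissibility check, set $w=r^\delta+\gamma s+\beta t$; I need $|(w-1)/(A-Bw)|\geq 1$ whenever $r=\mathfrak{B}(\zeta)$, $s=m\zeta\mathfrak{B}'(\zeta)$ and $\RE(1+t/s)\geq m(1+g(\theta))$ for $\zeta=e^{i\theta}$, $m\geq 1$. By Lemma \ref{janlemma} it is enough to show $|w|\geq (1+A)/(1+B)$. This is the step where the calculation from the preceding theorem transfers verbatim: the bounds $|r|=\omega(\theta)\leq\omega_{\theta_0}$ from (\ref{ot}), $|s|/m=d(\theta)\geq d(0)=\sqrt{2}\,e/(1+e^2)^{3/2}$ from (\ref{dt}), and $\min_{\theta} g(\theta)=g(0)=(1-2e^4-e^2)/(1+e^2)^2$ from (\ref{gt}), fed into the triangle-type estimate
\[
|w|\;\geq\;|\gamma||s|\,\RE\!\left(1+\dfrac{\beta}{\gamma}\cdot\dfrac{t}{s}\right)-|r|^{\delta},
\]
produce, after specialising to the worst case $m=1$, the lower bound
\[
|w|\;\geq\;\dfrac{\sqrt{2}\,e\bigl(\gamma(1+e^2)^2+\beta(1-2e^4-e^2)\bigr)}{(1+e^2)^{7/2}}\;-\;\omega_{\theta_0}^{\delta}.
\]

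Hypothesis (\ref{jan3}) is precisely the statement that this lower bound is at least $(1+A)/(1+B)$, so a single application of Lemma \ref{janlemma} closes the argument. The only subtlety, already implicit in the preceding theorem, is that the step $|\gamma s+\beta t|\geq |\gamma||s|\RE(1+(\beta/\gamma)(t/s))$ is valid only when that real part is non-negative; this is guaranteed by the admissibility condition $\RE(1+t/s)\geq m(1+g(0))>0$ combined with the standing (implicit) convention that $\beta/\gamma$ is positive real, which is anyway required for (\ref{jan3}) to be meaningful as an inequality between real quantities. Once that convention is fixed the computation is mechanical, and I anticipate no obstacle beyond this bookkeeping.
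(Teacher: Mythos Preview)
Your proposal is correct and follows the paper's own proof essentially line for line: the same choice of $\Omega$, the same $\psi(r,s,t;z)=r^\delta+\gamma s+\beta t$, the same reduction via Lemma~\ref{janlemma} to the modulus bound $|w|\geq(1+A)/(1+B)$, and the same chain of estimates using $d(0)$, $g(0)$ and $\omega_{\theta_0}$. Your explicit flagging of the implicit positivity assumption on $\beta/\gamma$ (needed both for the inequality $|\gamma s+\beta t|\geq|\gamma||s|\,\RE(1+(\beta/\gamma)t/s)$ and for \eqref{jan3} to make sense as a real inequality) is in fact more careful than the paper, which leaves this tacit.
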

\begin{proof}
 Here $q(z)=\mathfrak{B}(z)$ and $h(z)=(1+Az)/(1+Bz)$, thus $$\Omega=h(\mathbb{D})=\left\{w\in\mathbb{C}:\left|\dfrac{w-1}{A-Bw}\right|<1\right\},$$ Now consider $\psi:\mathbb{C}^3\times\mathbb{D}\to\mathbb{C}$ be as $\psi(r,s,t;z)=r^\delta+ \gamma s+\beta  t.$ For $\psi$ to lie in the admissible class $\Psi(\Omega,\mathfrak{B}),$  we must have $$\psi(r,s,t;z)\not\in\Omega,$$
 whenever
 $$r=\mathfrak{B}(\zeta)=\sqrt{\dfrac{2}{1+e^{-2e^{i\theta}}}};\;\;\;s=m\zeta\mathfrak{B}'(\zeta)=\dfrac{\sqrt{2}me^{i\theta}e^{-2e^{i\theta}}}{(1+e^{-2e^{i\theta}})^{3/4}};\;\;\;\RE\left(1+\dfrac{s}{t}\right)=m(1+g(\theta)),$$
 where $z\in\mathbb{D}$, $0\leq\theta<2\pi$ and $m\geq1.$ i.e. it is sufficient to show that $$\left|\dfrac{r^\delta+ \gamma s+\beta  t-1}{A-B(r^\delta+ \gamma s+\beta  t)}\right|\geq1.$$
 We now use the Lemma \ref{janlemma} to prove the result, for which,  let us assume $w=r^\delta+\gamma s+\beta t$. From (\ref{gt}), (\ref{ot}) and (\ref{dt}), we have $\min g(\theta)=g(0)$, $|s|/m=d(\theta)\geq\min d(\theta)=d(0)$ and $|r|=\omega(\theta)\leq\omega_{\theta_0}$. Thus
\begin{align*}
|w|&=|r^\delta+\gamma s+\beta t|\\&\geq\gamma |s|\RE\left(1+\dfrac{\beta}{\gamma}\dfrac{t}{s}\right)-|r|^\delta\\
&\geq \gamma m d(\theta)\left(1+\dfrac{\beta}{\gamma}(m g(\theta)+m-1)\right)-(\omega(\theta))^\delta\\
&\geq m\dfrac{\sqrt{2}e}{(e^2+1)^{3/2}}\left(\gamma+\beta\left(m\dfrac{1-2e^4-e^2}{(e^2+1)^2}+m-1\right)\right)-\omega_{\theta_0}^\delta.
\end{align*}Since  $m\geq1$ and by (\ref{jan3}), we have
$$|w|\geq\dfrac{\sqrt{2}e}{(e^2+1)^{3/2}}\left(\gamma+\beta\left(\dfrac{1-2e^4-e^2}{(e^2+1)^2}\right)\right)-\omega_{\theta_0}^\delta\geq \dfrac{1+A}{1+B}>1,$$
therefore,  by Lemma \ref{janlemma}, the result holds.
\end{proof}

\begin{theorem}
Let $\delta\in\{0,1\}$,  $\omega_{\theta_0}\simeq1.438$ and $\beta,\gamma\in\mathbb{C}\backslash\{0\}$. If $p(z)\in\mathcal{H}$ such that
$$(p(z))^\delta+\gamma zp'(z)+\beta z^2p''(z)\prec\sqrt{1+z},$$
provided \begin{equation}\label{lemn3}\sqrt{2}e(\gamma(1+e^2)^2+\beta(1-2e^4-e^2))\geq\left(\sqrt{2}+\omega_{\theta_0}^\delta\right)(e^2+1)^{7/2}\end{equation}
then $p(z)\prec \mathfrak{B}(z).$
\end{theorem}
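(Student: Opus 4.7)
The plan is to follow the template of the two preceding second-order theorems almost verbatim, changing only the target domain and the modulus lemma used at the end. Take $q(z)=\mathfrak{B}(z)$ and set $\Omega=\{w\in\mathbb{C}:|w^2-1|<1\}$, the image of $\sqrt{1+z}$. Define $\psi:\mathbb{C}^3\times\mathbb{D}\to\mathbb{C}$ by $\psi(r,s,t;z)=r^\delta+\gamma s+\beta t$. By the Miller--Mocanu admissibility lemma recalled in Section~1, to conclude $p\prec\mathfrak{B}$ it suffices to verify that $\psi\in\Psi(\Omega,\mathfrak{B})$, i.e.\ that
\[
\bigl|(r^\delta+\gamma s+\beta t)^2-1\bigr|\ge 1
\]
whenever $r,s,t$ satisfy the admissibility identities for $\mathfrak{B}$ listed at the end of Section~1.

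Writing $w=r^\delta+\gamma s+\beta t$, Lemma~\ref{lemlemma} reduces the goal to proving $|w|\ge\sqrt{2}$. From this point the argument is step-for-step identical to the estimates in the two preceding second-order proofs: factor $w=r^\delta+\gamma s\bigl(1+(\beta/\gamma)(t/s)\bigr)$, apply the triangle inequality together with $|z|\ge\RE z$, substitute the admissibility identity $\RE(1+t/s)=m(1+g(\theta))$, and insert the extrema $|r|\le\omega_{\theta_0}$ from (\ref{ot}), $|s|/m\ge d(0)$ from (\ref{dt}), and $\min g=g(0)$ from (\ref{gt}). The same algebra then collapses the bound to
\[
|w|\ge \frac{\sqrt{2}\,e}{(e^2+1)^{3/2}}\left(\gamma+\beta\,\frac{1-2e^4-e^2}{(e^2+1)^2}\right)-\omega_{\theta_0}^\delta,
\]
uniformly in $m\ge 1$ and $\theta$.

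Hypothesis (\ref{lemn3}) is calibrated precisely so that this right-hand side is at least $\sqrt{2}$, whence Lemma~\ref{lemlemma} delivers $|w^2-1|\ge 1$, i.e.\ $\psi(r,s,t;z)\notin\Omega$, completing the proof. There is no genuine obstacle here: the only change from the preceding $(1+Az)/(1+Bz)$ theorem is that the required threshold on $|w|$ drops from $(1+A)/(1+B)$ to $\sqrt{2}$, which accounts verbatim for the replacement of $(1+A)/(1+B)+\omega_{\theta_0}^\delta$ by $\sqrt{2}+\omega_{\theta_0}^\delta$ in (\ref{lemn3}). The one micro-step worth double-checking, as in the earlier proofs, is the passage from $|\gamma s(1+(\beta/\gamma)(t/s))|$ to $|\gamma|\,|s|\,\RE(1+(\beta/\gamma)(t/s))$ when $\gamma$ is complex, but this is dispatched identically to the author's previous computations.
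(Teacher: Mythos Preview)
Your proposal is correct and follows the paper's own proof essentially verbatim: the same admissibility setup with $\Omega=\{w:|w^{2}-1|<1\}$, the same reduction via Lemma~\ref{lemlemma} to $|w|\ge\sqrt{2}$, the same chain of estimates using $|r|\le\omega_{\theta_0}$, $|s|/m\ge d(0)$, $\min g=g(0)$, and the same final appeal to~(\ref{lemn3}). The micro-step you flag concerning complex $\gamma$ is handled (or rather, glossed over) in the paper exactly as you describe.
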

\begin{proof}
 Here $q(z)=\mathfrak{B}(z)$ and $h(z)=\sqrt{1+z}$, thus $$\Omega=h(\mathbb{D})=\left\{w\in\mathbb{C}:\left|w^2-1\right|<1\right\},$$ Now consider $\psi:\mathbb{C}^3\times\mathbb{D}\to\mathbb{C}$ be as $\psi(r,s,t;z)=r^\delta+ \gamma s+\beta  t.$ For $\psi$ to lie in the admissible class $\Psi(\Omega,\mathfrak{B}),$  we must have $$\psi(r,s,t;z)\not\in\Omega,$$
 whenever
 $$r=\mathfrak{B}(\zeta)=\sqrt{\dfrac{2}{1+e^{-2e^{i\theta}}}};\;\;\;s=m\zeta\mathfrak{B}'(\zeta)=\dfrac{\sqrt{2}me^{i\theta}e^{-2e^{i\theta}}}{(1+e^{-2e^{i\theta}})^{3/4}};\;\;\;\RE\left(1+\dfrac{s}{t}\right)=m(1+g(\theta)),$$
 where $z\in\mathbb{D}$, $0\leq\theta<2\pi$ and $m\geq1.$ i.e. it is sufficient to show that $$\left|\left(r^\delta+ \gamma s+\beta  t\right)^2-1\right|\geq1.$$
 We now use the Lemma \ref{lemlemma} to prove the result, for which,  let us assume $w=r^\delta+\gamma s+\beta t$. From (\ref{gt}), (\ref{ot}) and (\ref{dt}), we have $\min g(\theta)=g(0)$, $|s|/m=d(\theta)\geq\min d(\theta)=d(0)$ and $|r|=\omega(\theta)\leq\omega_{\theta_0}$. Thus
\begin{align*}
|w|&=|r^\delta+\gamma s+\beta t|\\&\geq\gamma |s|\RE\left(1+\dfrac{\beta}{\gamma}\dfrac{t}{s}\right)-|r|^\delta\\
&\geq \gamma m d(\theta)\left(1+\dfrac{\beta}{\gamma}(m g(\theta)+m-1)\right)-(\omega(\theta))^\delta\\
&\geq m\dfrac{\sqrt{2}e}{(e^2+1)^{3/2}}\left(\gamma+\beta\left(m\dfrac{1-2e^4-e^2}{(e^2+1)^2}+m-1\right)\right)-\omega_{\theta_0}^\delta.
\end{align*}Since  $m\geq1$ and by (\ref{lemn3}), we have
$$|w|\geq\dfrac{\sqrt{2}e}{(e^2+1)^{3/2}}\left(\gamma+\beta\left(\dfrac{1-2e^4-e^2}{(e^2+1)^2}\right)\right)-\omega_{\theta_0}^\delta\geq \sqrt{2}>1,$$
therefore,  by Lemma \ref{lemlemma}, the result holds.
\end{proof}

\section*{Declarations}
\textbf{Ethical Approval } Not Applicable\\
\textbf{Conflicts of interest } The authors declare that they have no competing interests.\\
\textbf{Authors' contributions } All authors contributed equally.\\
\textbf{Funding } Not Applicable\\
\textbf{Availability of data and materials } Not Applicable

\end{document}